\newcommand{\ol}[1]{\overline{#1}}
\numberwithin{equation}{section}
\theoremstyle{plain}
\newtheorem{theorem}[equation]{Theorem}
\newtheorem{lemma}[equation]{Lemma}
\newtheorem{proposition}[equation]{Proposition}
\newtheorem{conjecture}[equation]{Conjecture}
\theoremstyle{definition}
\newtheorem{example}[equation]{Example}
\newtheorem{remark}[equation]{Remark}
\newtheorem{definition}[equation]{Definition}
\numberwithin{equation}{section}
\newtheorem*{ack}{Acknowledgements}
\def\Z{\mathbb Z}
\def\Q{\mathbb Q}
\def\C{\mathbb C}
\def\F{\Z_2}
\newcommand{\intfrac}[2]{\genfrac{\lfloor}{\rfloor}{}{1}{#1}{#2}}
\def\p{\partial}
\def\sm{\setminus}
\def\CP{\C P}
\DeclareMathOperator{\CFK}{\it CFK}
\newcommand{\spinc}{\ifmmode{{\mathfrak s}}\else{${\mathfrak s}$\ }\fi}
\newcommand{\spinct}{\ifmmode{{\mathfrak t}}\else{${\mathfrak t}$\ }\fi}
\def\Spc{Spin$^c$}
\DeclareMathOperator\St{St}
\DeclareMathOperator\Vrt{Vert}
\def\staircasefig{%
\begin{pspicture}(-3.5,-3.5)(3.5,3.5)
\rput(-2,-2){\psscalebox{0.6}{%
\psgrid[subgriddiv=1,gridcolor=gray](-1,-1)(-1,-1)(7,7)
\psline[linewidth=1.6pt,linecolor=gray](-1,0)(7,0)
\psline[linewidth=1.6pt,linecolor=gray](0,-1)(0,7)
\psline[linewidth=2pt,linecolor=black,ArrowInside=->](6,1)(6,0)
\psline[linewidth=2pt,linecolor=black,ArrowInside=->](4,2)(4,1)
\psline[linewidth=2pt,linecolor=black,ArrowInside=->](2,4)(2,2)
\psline[linewidth=2pt,linecolor=black,ArrowInside=->](1,6)(1,4)
\psline[linewidth=2pt,linecolor=black,ArrowInside=->](1,6)(0,6)
\psline[linewidth=2pt,linecolor=black,ArrowInside=->](2,4)(1,4)
\psline[linewidth=2pt,linecolor=black,ArrowInside=->](4,2)(2,2)
\psline[linewidth=2pt,linecolor=black,ArrowInside=->](6,1)(4,1)

\pscircle[fillstyle=solid,fillcolor=black,linecolor=black](6,0){0.15}
\pscircle[fillstyle=solid,fillcolor=black,linecolor=black](1,4){0.15}
\pscircle[fillstyle=solid,fillcolor=black,linecolor=black](2,2){0.15}
\pscircle[fillstyle=solid,fillcolor=black,linecolor=black](4,1){0.15}
\pscircle[fillstyle=solid,fillcolor=black,linecolor=black](0,6){0.15}
\pscircle[fillstyle=solid,fillcolor=gray,linecolor=black](6,1){0.15}
\pscircle[fillstyle=solid,fillcolor=gray,linecolor=black](1,6){0.15}
\pscircle[fillstyle=solid,fillcolor=gray,linecolor=black](4,2){0.15}
\pscircle[fillstyle=solid,fillcolor=gray,linecolor=black](2,4){0.15}
}}
\end{pspicture}}
\def\infinitestaircasefig{%
\begin{pspicture}(-3.5,-3.5)(3.5,3.5)
\rput(-2,-2){\psscalebox{0.6}{%
\psgrid[subgriddiv=1,gridcolor=gray](-1,-1)(-1,-1)(7,7)
\psline[linewidth=1.6pt,linecolor=gray](-1,0)(7,0)
\psline[linewidth=1.6pt,linecolor=gray](0,-1)(0,7)
\psline[linewidth=2pt,linecolor=black,ArrowInside=->](6,1)(6,0)
\psline[linewidth=2pt,linecolor=black,ArrowInside=->](4,2)(4,1)
\psline[linewidth=2pt,linecolor=black,ArrowInside=->](2,4)(2,2)
\psline[linewidth=2pt,linecolor=black,ArrowInside=->](1,6)(1,4)
\psline[linewidth=2pt,linecolor=black,ArrowInside=->](1,6)(0,6)
\psline[linewidth=2pt,linecolor=black,ArrowInside=->](2,4)(1,4)
\psline[linewidth=2pt,linecolor=black,ArrowInside=->](4,2)(2,2)
\psline[linewidth=2pt,linecolor=black,ArrowInside=->](6,1)(4,1)
\pscircle[fillstyle=solid,fillcolor=black,linecolor=black](6,0){0.15}
\pscircle[fillstyle=solid,fillcolor=black,linecolor=black](1,4){0.15}
\pscircle[fillstyle=solid,fillcolor=black,linecolor=black](2,2){0.15}
\pscircle[fillstyle=solid,fillcolor=black,linecolor=black](4,1){0.15}
\pscircle[fillstyle=solid,fillcolor=black,linecolor=black](0,6){0.15}
\pscircle[fillstyle=solid,fillcolor=gray,linecolor=black](6,1){0.15}
\pscircle[fillstyle=solid,fillcolor=gray,linecolor=black](1,6){0.15}
\pscircle[fillstyle=solid,fillcolor=gray,linecolor=black](4,2){0.15}
\pscircle[fillstyle=solid,fillcolor=gray,linecolor=black](2,4){0.15}

\psline[linewidth=2pt,linecolor=black,ArrowInside=->](7,2)(7,1)
\psline[linewidth=2pt,linecolor=black,ArrowInside=->](5,3)(5,2)
\psline[linewidth=2pt,linecolor=black,ArrowInside=->](3,5)(3,3)
\psline[linewidth=2pt,linecolor=black,ArrowInside=->](2,7)(2,5)
\psline[linewidth=2pt,linecolor=black,ArrowInside=->](2,7)(1,7)
\psline[linewidth=2pt,linecolor=black,ArrowInside=->](3,5)(2,5)
\psline[linewidth=2pt,linecolor=black,ArrowInside=->](5,3)(3,3)
\psline[linewidth=2pt,linecolor=black,ArrowInside=->](7,2)(5,2)
\pscircle[fillstyle=solid,fillcolor=black,linecolor=black](7,1){0.15}
\pscircle[fillstyle=solid,fillcolor=black,linecolor=black](2,5){0.15}
\pscircle[fillstyle=solid,fillcolor=black,linecolor=black](3,3){0.15}
\pscircle[fillstyle=solid,fillcolor=black,linecolor=black](5,2){0.15}
\pscircle[fillstyle=solid,fillcolor=black,linecolor=black](1,7){0.15}
\pscircle[fillstyle=solid,fillcolor=gray,linecolor=black](7,2){0.15}
\pscircle[fillstyle=solid,fillcolor=gray,linecolor=black](2,7){0.15}
\pscircle[fillstyle=solid,fillcolor=gray,linecolor=black](5,3){0.15}
\pscircle[fillstyle=solid,fillcolor=gray,linecolor=black](3,5){0.15}

\psline[linewidth=2pt,linecolor=black,ArrowInside=->](6,4)(6,3)
\psline[linewidth=2pt,linecolor=black,ArrowInside=->](4,6)(4,4)
\psline[linewidth=2pt,linecolor=black,ArrowInside=->](3,7)(3,6)
\psline[linewidth=2pt,linecolor=black,ArrowInside=->](4,6)(3,6)
\psline[linewidth=2pt,linecolor=black,ArrowInside=->](6,4)(4,4)
\psline[linewidth=2pt,linecolor=black,ArrowInside=->](7,3)(6,3)
\pscircle[fillstyle=solid,fillcolor=black,linecolor=black](3,6){0.15}
\pscircle[fillstyle=solid,fillcolor=black,linecolor=black](4,4){0.15}
\pscircle[fillstyle=solid,fillcolor=black,linecolor=black](6,3){0.15}
\pscircle[fillstyle=solid,fillcolor=gray,linecolor=black](6,4){0.15}
\pscircle[fillstyle=solid,fillcolor=gray,linecolor=black](4,6){0.15}

\psline[linewidth=2pt,linecolor=black,ArrowInside=->](7,5)(7,4)
\psline[linewidth=2pt,linecolor=black,ArrowInside=->](5,7)(5,5)
\psline[linewidth=2pt,linecolor=black,ArrowInside=->](5,7)(4,7)
\psline[linewidth=2pt,linecolor=black,ArrowInside=->](7,5)(5,5)
\psline[linewidth=2pt,linecolor=black,ArrowInside=->](7,3)(6,3)
\pscircle[fillstyle=solid,fillcolor=black,linecolor=black](4,7){0.15}
\pscircle[fillstyle=solid,fillcolor=black,linecolor=black](5,5){0.15}
\pscircle[fillstyle=solid,fillcolor=black,linecolor=black](7,4){0.15}
\pscircle[fillstyle=solid,fillcolor=gray,linecolor=black](7,5){0.15}
\pscircle[fillstyle=solid,fillcolor=gray,linecolor=black](5,7){0.15}

\psline[linewidth=2pt,linecolor=black,ArrowInside=->](6,7)(6,6)
\psline[linewidth=2pt,linecolor=black,ArrowInside=->](7,6)(6,6)
\pscircle[fillstyle=solid,fillcolor=black,linecolor=black](6,6){0.15}

\psline[linewidth=2pt,linecolor=black,ArrowInside=->](5,0)(5,-1)
\psline[linewidth=2pt,linecolor=black,ArrowInside=->](3,1)(3,0)
\psline[linewidth=2pt,linecolor=black,ArrowInside=->](1,3)(1,1)
\psline[linewidth=2pt,linecolor=black,ArrowInside=->](0,5)(0,3)
\psline[linewidth=2pt,linecolor=black,ArrowInside=->](0,5)(-1,5)
\psline[linewidth=2pt,linecolor=black,ArrowInside=->](1,3)(0,3)
\psline[linewidth=2pt,linecolor=black,ArrowInside=->](3,1)(1,1)
\psline[linewidth=2pt,linecolor=black,ArrowInside=->](5,0)(3,0)
\pscircle[fillstyle=solid,fillcolor=black,linecolor=black](5,-1){0.15}
\pscircle[fillstyle=solid,fillcolor=black,linecolor=black](0,3){0.15}
\pscircle[fillstyle=solid,fillcolor=black,linecolor=black](1,1){0.15}
\pscircle[fillstyle=solid,fillcolor=black,linecolor=black](3,0){0.15}
\pscircle[fillstyle=solid,fillcolor=black,linecolor=black](-1,5){0.15}
\pscircle[fillstyle=solid,fillcolor=gray,linecolor=black](5,0){0.15}
\pscircle[fillstyle=solid,fillcolor=gray,linecolor=black](0,5){0.15}
\pscircle[fillstyle=solid,fillcolor=gray,linecolor=black](3,1){0.15}
\pscircle[fillstyle=solid,fillcolor=gray,linecolor=black](1,3){0.15}

\psline[linewidth=2pt,linecolor=black,ArrowInside=->](2,0)(2,-1)
\psline[linewidth=2pt,linecolor=black,ArrowInside=->](0,2)(0,0)
\psline[linewidth=2pt,linecolor=black,ArrowInside=->](-1,4)(-1,2)
\psline[linewidth=2pt,linecolor=black,ArrowInside=->](0,2)(-1,2)
\psline[linewidth=2pt,linecolor=black,ArrowInside=->](2,0)(0,0)
\psline[linewidth=2pt,linecolor=black,ArrowInside=->](4,-1)(2,-1)
\pscircle[fillstyle=solid,fillcolor=black,linecolor=black](-1,2){0.15}
\pscircle[fillstyle=solid,fillcolor=black,linecolor=black](0,0){0.15}
\pscircle[fillstyle=solid,fillcolor=black,linecolor=black](2,-1){0.15}
\pscircle[fillstyle=solid,fillcolor=gray,linecolor=black](4,-1){0.15}
\pscircle[fillstyle=solid,fillcolor=gray,linecolor=black](-1,4){0.15}
\pscircle[fillstyle=solid,fillcolor=gray,linecolor=black](2,0){0.15}
\pscircle[fillstyle=solid,fillcolor=gray,linecolor=black](0,2){0.15}

\psline[linewidth=2pt,linecolor=black,ArrowInside=->](-1,1)(-1,-1)
\psline[linewidth=2pt,linecolor=black,ArrowInside=->](1,-1)(-1,-1)

\pscircle[fillstyle=solid,fillcolor=black,linecolor=black](-1,-1){0.15}
\pscircle[fillstyle=solid,fillcolor=gray,linecolor=black](1,-1){0.15}
\pscircle[fillstyle=solid,fillcolor=gray,linecolor=black](-1,1){0.15}
}}
\end{pspicture}}
\def\functionJm{%
\begin{pspicture}(-5,-5)(5,5)
\rput(0,0){\psscalebox{0.6}{%
\psgrid[subgriddiv=1,gridcolor=gray](0,0)(-7,-7)(7,7)
\psline[linewidth=1.2pt,linecolor=gray](-1,0)(7,0)
\psline[linewidth=1.2pt,linecolor=gray](0,-1)(0,7)
\psline[linewidth=2pt,linecolor=black](6,0)(6,1)(4,1)(4,2)(2,2)(2,4)(1,4)(1,6)(0,6)
\psline[linestyle=dashed,linecolor=blue,linewidth=4pt](0,5)(0,3)
\psline[linestyle=dashed,linecolor=blue,linewidth=4pt](0,2)(0,0)
\psline[linestyle=dashed,linecolor=blue,linewidth=4pt](0,-2)(0,-3)
\psline[linestyle=dashed,linecolor=blue,linewidth=4pt](0,-5)(0,-6)
\psline[linestyle=dashed,linecolor=blue,linewidth=4pt](0,7)(0,6)

\psline[linestyle=dashed, linecolor=gray](6,1)(0,-5)
\psline[linestyle=dashed, linecolor=gray](4,2)(0,-2)
\psline[linestyle=dashed,linewidth= 1.4pt, linecolor=black](4,1)(0,-3)
\psline[linestyle=dashed, linewidth= 1.4pt,linecolor=black](2,2)(0,0)
\psline[linestyle=dashed, linecolor=gray](1,6)(0,5)
\psline[linestyle=dashed, linecolor=gray](2,4)(0,2)
\psline[linestyle=dashed, linewidth= 1.4pt, linecolor=black](6,0)(0,-6)
\psline[linestyle=dashed,linewidth= 1.4pt, linecolor=black](1,4)(0,3)

\pscircle[fillstyle=solid,fillcolor=black,linecolor=black](6,0){0.15}
\pscircle[fillstyle=solid,fillcolor=black,linecolor=black](1,4){0.15}
\pscircle[fillstyle=solid,fillcolor=black,linecolor=black](2,2){0.15}
\pscircle[fillstyle=solid,fillcolor=black,linecolor=black](4,1){0.15}
\pscircle[fillstyle=solid,fillcolor=black,linecolor=black](0,6){0.15}
\pscircle[fillstyle=solid,fillcolor=gray,linecolor=black](6,1){0.15}
\pscircle[fillstyle=solid,fillcolor=gray,linecolor=black](1,6){0.15}
\pscircle[fillstyle=solid,fillcolor=gray,linecolor=black](4,2){0.15}
\pscircle[fillstyle=solid,fillcolor=gray,linecolor=black](2,4){0.15}

\psline[linestyle=solid,linecolor=purple,linewidth=4pt](0,6)(0,5)
\psline[linestyle=solid,linecolor=purple,linewidth=4pt](0,3)(0,2)
\psline[linestyle=solid,linecolor=purple,linewidth=4pt](0,0)(0,-2)
\psline[linestyle=solid,linecolor=purple,linewidth=4pt](0,-3)(0,-5)
\psline[linestyle=solid,linecolor=purple,linewidth=4pt](0,-6)(0,-7)
\pscircle[linestyle=solid,fillcolor=white,fillstyle=solid](0,5){0.15}
\pscircle[linestyle=solid,fillcolor=white,fillstyle=solid](0,3){0.15}
\pscircle[linestyle=solid,fillcolor=white,fillstyle=solid](0,2){0.15}
\pscircle[linestyle=solid,fillcolor=white,fillstyle=solid](0,0){0.15}
\pscircle[linestyle=solid,fillcolor=white,fillstyle=solid](0,-2){0.15}
\pscircle[linestyle=solid,fillcolor=white,fillstyle=solid](0,-3){0.15}
\pscircle[linestyle=solid,fillcolor=white,fillstyle=solid](0,-5){0.15}
\pscircle[linestyle=solid,fillcolor=white,fillstyle=solid](0,-6){0.15}
\pscircle[linestyle=solid,fillcolor=white,fillstyle=solid](0,6){0.15}
}}
\end{pspicture}}
\title[Rational cuspidal curves]{Heegaard Floer homology and  rational cuspidal curves}
\author{Maciej Borodzik}
\address{Institute of Mathematics, University of Warsaw, ul. Banacha 2,
02-097 Warsaw, Poland}
\email{mcboro@mimuw.edu.pl}
\thanks{The first author was supported by  Polish OPUS grant No 2012/05/B/ST1/03195}
\thanks{The second author was supported by
 National Science Foundation   Grant  1007196.}
\author{Charles Livingston}
\address{Department of Mathematics, Indiana University, Bloomington, IN 47405}
\email{livingst@indiana.edu}
\subjclass[2010]{primary: 14H50, secondary: 14B05, 57M25, 57R58} 
\keywords{rational cuspidal curve, $d$--invariant, surgery, semigroup density}
\begin{document}
\begin{abstract}
We apply the methods of Heegaard Floer homology to identify topological properties of complex curves in $\CP^2$.  As one application, we resolve an open conjecture that constrains the Alexander polynomial of the link of the singular point of the curve in the case that there is exactly one singular point, having connected link, and the curve is of genus 0.  Generalizations apply in the case of multiple singular points.

\end{abstract}
\maketitle

\section{Introduction}

We consider irreducible algebraic curves $C\subset \CP^2$. Such a curve has a finite set of singular points, $\{z_i\}_{i=1}^{n}$; 
a neighborhood of each intersects $C$ in a cone on a link $L_i \subset S^3$. 
A  fundamental question  asks what possible configurations of  links $\{L_i\}$ arise in this way.  
In this generality the problem is fairly intractable  and research has focused on a restricted case, 
in which each $L_i$ is connected, and thus a knot $K_i$, and  $C$ is a rational curve, meaning that there is a rational 
surjective map $\CP^1 \to C$.   
Such a curve is called   {\it rational cuspidal}.  Being rational cuspidal is equivalent to $C$ being homeomorphic to $S^2$. 

Our results apply in the case of multiple singular points, but the following statement gives an indication of the nature of the results and their
consequences.\vskip.1in

\begin{theorem}\label{thm:first}
Suppose that $C$ is a rational cuspidal curve of degree $d$ with one singular point, a cone on the knot $K$, and the Alexander polynomial of $K$ is expanded at $t=1$ to be $\Delta_K(t) = 1 +\frac{(d-1)(d-2)}{2}(t-1) + (t-1)^2\sum_l k_l t^l$.  Then for all $j, 0 \le j \le d-3$, $k_{d(d-j-3)} = (j-1)(j-2)/2$.
\end{theorem}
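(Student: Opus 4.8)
The plan is to translate everything into the combinatorics of the semigroup $S$ of the cusp and then feed that into the $d$--invariant obstruction for the surgered manifold $S^3_{d^2}(K)$. First I would record the classical identity relating the Alexander polynomial of the link of a singularity to its semigroup, namely $\Delta_K(t) = (1-t)\sum_{s\in S} t^s$. Substituting the given expansion $\Delta_K(t) = 1 + \frac{(d-1)(d-2)}{2}(t-1) + (t-1)^2\sum_l k_l t^l$ and comparing coefficients of $t^m$ on both sides, one finds that each $k_l$ equals the number of gaps of $S$ that exceed $l$; equivalently $k_l = R(l+1) - (l+1) + g$, where $R(m) = \#(S \cap [0,m))$ is the semigroup counting function and $g = \frac{(d-1)(d-2)}{2}$ is the genus (which must equal $\delta$ since $C$ is rational). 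This reduces the theorem to the assertion that, at the arguments $l = d(d-j-3)$, the function $R$ attains prescribed binomial values.

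Next I would bring in Heegaard Floer homology through the boundary of a neighborhood of $C$. Since $[C]^2 = d^2$ and $C$ is topologically $S^2$ carrying a single cusp modeled on $K$, the boundary of a tubular neighborhood of $C$ is $S^3_{d^2}(K)$, and $\CP^2$ splits along it into a disk bundle and the complement. A smooth degree--$d$ curve has the same self--intersection and neighborhood boundary $S^3_{d^2}(U) = L(d^2,1)$, so the two cases differ only in the local cone. Comparing the correction terms $d(S^3_{d^2}(K),\spinc_i)$ with $d(S^3_{d^2}(U),\spinc_i)$, using the disk--bundle filling on one side and the curve complement (together with the intersection form inherited from $\CP^2$) on the other, yields two--sided inequalities on the local contributions.

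The decisive simplification is that an algebraic knot is an $L$--space knot, so its knot Floer complex, and hence the surgery correction terms $V_i(K)$, are determined by $\Delta_K(t)$ alone, i.e. by $S$. Concretely, $V_i(K)$ is exactly the gap--counting quantity of Step 1 (up to reindexing), and the Ozsv\'ath--Szab\'o large--surgery / mapping--cone formula gives $d(S^3_{d^2}(K),\spinc_i) - d(S^3_{d^2}(U),\spinc_i) = -2V_{|i|}(K)$ once the Spin$^c$ labels are matched to residues modulo $d$. Substituting the two--sided bounds from Step 2 into this identity pins the $V_i$ at the indices corresponding to $l = d(d-j-3)$ to a single value, which through Step 1 is precisely the claimed formula for $k_{d(d-j-3)}$.

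The hard part will be the bookkeeping in Steps 2--3: fixing the precise correspondence between the $\spinc$ structures on $S^3_{d^2}(K)$ and the integers $l = d(d-j-3)$, and, crucially, upgrading the inequalities to equalities. I expect the equality to arise from applying the obstruction in both directions, with the disk--bundle filling forcing one inequality and the complement of $C$ (exploiting that a generic line meets $C$ in $d$ points, so the hyperplane class organizes the relevant $\spinc$ structures) forcing the reverse, so that only the extremal value survives at these residues. Verifying that both sharp fillings exist and that their intersection forms constrain the correction terms exactly at the arithmetic progression $l \equiv 0 \pmod d$ is where the genuine work lies; the final passage among $V_i$, $R$, and $k_l$ is then routine manipulation of binomial coefficients.
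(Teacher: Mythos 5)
Your Steps 1 and 3 are correct and coincide with the paper's own route: the identification of the coefficients $k_l$ with values of the gap-counting function is Lemma~\ref{lem:gaplem} and Remark~\ref{rem:kj}, and the computation of correction terms of large surgery on an $L$-space knot from its Alexander polynomial (your $V_i$; the paper's $J_K$, with $V_m=J_K(m)=I_K(m+g)$) is Propositions~\ref{prop:dofalg} and~\ref{jequalithm}. The genuine gap sits exactly where you say ``the genuine work lies,'' and the mechanism you propose there fails. The neighborhood $Y$ of $C$ (the $2$--handle attached along $K$ with framing $d^2$) is a \emph{positive} definite filling of $S^3_{d^2}(K)$, so the only way to feed it into the Ozsv\'ath--Szab\'o correction-term inequality is to reverse its orientation; doing so yields $d\bigl(S^3_{d^2}(K),\spinc_m\bigr)\le \frac{(2m-d^2)^2-d^2}{4d^2}$, which by your own identity in Step 3 is just $V_{|m|}\ge 0$ --- vacuous. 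Hence the ``disk-bundle side'' cannot force either of the two needed inequalities, and your scheme produces only the single nontrivial inequality coming from the complement with one fixed orientation. One inequality alone recovers a one-sided bound on $k_{d(d-j-3)}$ (one direction of Conjecture~\ref{conj:one}, and the other direction was already known from \cite{FLMN04}), but never the equality, which is the entire content of the theorem.

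What actually pins the value is a homological fact your proposal never establishes: $W=\overline{\CP^2- Y}$ satisfies $H_1(W)=\Z_d$ and $H_2(W)=0$ (compute $H_*(\CP^2,C)$, excise, and apply duality), i.e.\ $W$ is a \emph{rational homology ball}. In particular its intersection form is trivial --- not some nontrivial form ``inherited from $\CP^2$,'' as your Step 2 suggests. Both inequalities then come from $W$ alone, applied with its two orientations, which is precisely \cite[Proposition 9.9]{OzSz03}: every \Spc{} structure on $S^3_{d^2}(K)$ that extends over $W$ has correction term exactly $0$. Combined with the enumeration of the extending structures as $\spinc_m$ with $m=kd$ (the parity bookkeeping for even $d$ is Lemma~\ref{extendinglemma}, the part you deferred), this gives the equality $2J_K(kd)=\frac{(d^2-2kd)^2-d^2}{4d^2}$, and your Steps 1 and 3 finish the argument. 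A further slip, inessential but worth correcting: a smooth degree-$d$ curve has genus $(d-1)(d-2)/2$, so for $d\ge 3$ its neighborhood boundary is a circle bundle over a positive-genus surface, not $L(d^2,1)$; the comparison with $S^3_{d^2}(U)=L(d^2,1)$ that you need is purely the algebraic large-surgery formula and is not realized by any embedded smooth curve in $\CP^2$, so no ``complement of a smooth curve'' filling of the lens space is available either.
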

\vskip.1in 

There are three facets to the work here:

\begin{enumerate}
\item We begin   with a basic observation that a neighborhood $Y$ of $C$  is built from the 4--ball by attaching a 2--handle along the knot $K =\#K_i$ with framing $d^2$, where $d$ is the degree of the curve.  Thus, its boundary, $S^3_{d^2}(K)$, bounds the rational homology ball $\CP^2\sm Y$.  From this, it follows that the Heegaard Floer correction term satisfies $d(S^3_{d^2}(K), \spinc_{m}) = 0$   if $d | m$, for properly enumerated \Spc structures $\spinc_m$.

\item  Because each $K_i$ is an algebraic knot (in particular an  $L$--space knot), the Heegaard Floer complex $\CFK^\infty(S^3, K_i)$ is determined by the Alexander polynomial of $K_i$, and thus the complex $\CFK^\infty(S^3, K)$ and the $d$--invariants are also determined by the Alexander polynomials of the $K_i$.   

\item  The constraints that arise on the Alexander polynomials, although initially appearing quite intricate, can be reinterpreted in compact form 
using semigroups of singular points. In this way, we can relate these constraints to well-known conjectures.

\end{enumerate}

\subsection{The conjecture of Fern\'andez de Bobadilla, Luengo, Melle-Hernandez and N\'emethi}
In \cite{FLMN04} the following conjecture was proposed. It was also verified for all known examples of rational cuspidal curves.

\begin{conjecture}[\cite{FLMN04}]\label{conj:one}
Suppose that the rational cuspidal curve $C$ of degree $d$ has critical points $z_1,\dots,z_n$. Let $K_1,\dots,K_n$ be the corresponding links
of singular points and let $\Delta_1,\dots,\Delta_n$ be their Alexander polynomials. Let $\Delta=\Delta_1\cdot\ldots\cdot\Delta_n$, expanded as
\[\Delta(t)=1+\frac{(d-1)(d-2)}{2}(t-1) +(t-1)^2\sum_{j=0}^{2g-2}k_lt^l.\]
Then for any $j=0,\dots,d-3$,  $k_{d(d-j-3)}\le (j+1)(j+2)/2$, with equality for $n=1$.
\end{conjecture}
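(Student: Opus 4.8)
The plan is to turn the topological input (the first observation of the introduction) into exact values of the correction-term invariants of $K=\#_i K_i$, read those invariants off $\Delta=\prod_i\Delta_i$ using the second and third observations, and then compare the two expressions. Because $2g(K)-1=(d-1)(d-2)-1<d^2$, the framing $d^2$ is a large surgery, so for each \spinc structure
\[
d(S^3_{d^2}(K),\spinc_m)=d(L(d^2,1),\spinc_m)-2V_{|m|},
\]
where the $V_\bullet$ are extracted from $\CFK^\infty(S^3,K)$ and $d(L(d^2,1),\spinc_m)=\tfrac{(2m-d^2)^2-d^2}{4d^2}$. Imposing $d(S^3_{d^2}(K),\spinc_m)=0$ for $d\mid m$ and writing $m=dk$ collapses the right-hand side to the closed form $V_{dk}=\tfrac18\bigl((2k-d)^2-1\bigr)$.

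Next I would reconcile the indexing with the statement. Setting $j$ by $d(d-j-3)=dk+g-1$, where $g=\tfrac{(d-1)(d-2)}2$, gives $2k-d=-(2j+3)$ and hence $V_{dk}=\tfrac18\bigl((2j+3)^2-1\bigr)=\tfrac12(j+1)(j+2)$, exactly the value predicted on the right. To match the left-hand coefficient $k_{d(d-j-3)}$ I would use the second and third observations: each $\Delta_i(t)/(1-t)=\sum_{s\in S_i}t^s$ for the semigroup $S_i$, and expanding $\Delta$ about $t=1$ produces the formal identity $k_{m+g-1}=t_m(\Delta)$, where $t_m(\Delta)=\sum_{r\ge1}r\,a_{m+r}$ is the $m$-th torsion coefficient of $\Delta$ (this is essentially how the $k_l$ are defined). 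Thus the conjecture reduces to comparing $t_m\bigl(\prod_i\Delta_i\bigr)$ with $V_m\bigl(\#_i K_i\bigr)$.

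For $n=1$ the comparison is an equality and yields Theorem~\ref{thm:first}: $K=K_1$ is an $L$--space knot, $\CFK^\infty(S^3,K)$ is the staircase determined by $\Delta$, and for staircases $V_m=t_m$, so $k_{d(d-j-3)}=t_m=V_{dk}=\tfrac12(j+1)(j+2)$. For $n\ge2$ I would pass to $\CFK^\infty(S^3,K)=\bigotimes_i\CFK^\infty(S^3,K_i)$ and split this tensor product of staircases, over $\F$, as a single staircase summand plus a collection of acyclic ``box'' summands. The invariants $V_m$ depend only on the staircase summand, so $V_m=t_m(\Delta_{\mathrm{St}})$, whereas $t_m$ of the full polynomial also records the boxes, and $\Delta=\prod_i\Delta_i=\Delta_{\mathrm{St}}+\Delta_{\mathrm{box}}$. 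The goal is then to show each box contributes non-positively to the torsion coefficients, giving $t_m(\prod_i\Delta_i)\le t_m(\Delta_{\mathrm{St}})=V_m$ and therefore $k_{d(d-j-3)}\le\tfrac12(j+1)(j+2)$.

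The main obstacle is precisely this last step: controlling the staircase-plus-boxes decomposition of an iterated tensor product and verifying that every box lowers the torsion coefficients relative to $V_m$, i.e.\ that the combined semigroup-counting function never exceeds the parabola forced by the vanishing $d$--invariants. I expect the remaining points to be routine by comparison, including the parity bookkeeping when $d$ is even (where $k=\tfrac{d-2j-3}2$ is a half-integer and one invokes the conjugation symmetry $V_{-m}=H_m$ to cover the \spinc structures with $d\mid m$ but $m>d^2/2$).
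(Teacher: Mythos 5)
Your argument for the unicuspidal case $n=1$ is correct and is essentially the paper's proof of Theorem~\ref{thm:first}: vanishing of the correction terms for the \Spc{} structures on $S^3_{d^2}(K)$ that extend over the rational homology ball $W$, the large-surgery formula, the identification of the resulting invariants (your $V_m$, the paper's $J_K(m)$) with the gap/torsion data determined by the Alexander polynomial of an $L$--space knot, and the same index shift $2k-d=-(2j+3)$. (One small point: for $d$ even the extending structures have $m$ equal to an \emph{odd} multiple of $d/2$, not $d\mid m$; this is the content of Lemma~\ref{extendinglemma} and is indeed only bookkeeping.) Be aware, however, that the paper claims and proves the conjecture only for $n=1$; for $n>1$ it proves a different statement, Theorem~\ref{thm:main}, about the infimum convolution $I_1\diamond\cdots\diamond I_n$, and explicitly states that it does not know whether that theorem settles Conjecture~\ref{conj:one} for $n>1$.

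The step you flag as the ``main obstacle'' for $n\ge 2$ is not merely unproven: the inequality you propose to establish, namely $t_m\bigl(\prod_i\Delta_i\bigr)\le V_m\bigl(\#_i K_i\bigr)$ for all $m$, is false, so the strategy cannot be completed as stated. Take $K=\#^3T(2,3)$. Then $\prod_i\Delta_i=(t-1+t^{-1})^3=t^3-3t^2+6t-7+6t^{-1}-3t^{-2}+t^{-3}$, so $t_0=1\cdot 6+2\cdot(-3)+3\cdot 1=3$, whereas $V_0(\#^3T(2,3))=2$ (this follows from Propositions~\ref{prop:d-surg} and~\ref{prop:ikjk}: $V_0=J_K(0)=I_1\diamond I_2\diamond I_3(3)=2$, and agrees with the well-known value $\lceil n/2\rceil$ for $n$ trefoils). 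The failure is a sign issue in your box decomposition: a box whose top corner sits in \emph{even} Maslov grading contributes $-t^{l}(t-1)^2$ to the graded Euler characteristic, lowering some $k_l$ by $1$, but a box whose top corner sits in \emph{odd} Maslov grading contributes $+t^{l}(t-1)^2$, raising it. In a product of two staircases every box has its top in grading $2$, so all contributions are negative; but once three staircases are tensored there are generators in grading $3$, which force boxes with odd tops, and these occur with positive sign --- exactly what makes $t_0=3>2$ in the example above. (Moreover, for three or more factors the splitting into a single staircase plus boxes is itself not established.) This discrepancy is precisely why the paper works with the infimum convolution of the gap functions rather than with the coefficients of the product polynomial --- the two genuinely differ for $n\ge 2$, as in the degree-$6$ example of Section~\ref{sec:certain}, where $k_6=5$ while $I\diamond I'(7)=6$ --- and why the $n>1$ case of the conjecture remains open in the paper. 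Any proof along your lines would have to exploit the special values $m=kd$ and the genus condition $\sum g_i=\tfrac12(d-1)(d-2)$, not a termwise sign argument.
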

We remark that the case $n=1$ of the conjecture is  Theorem~\ref{thm:first}. We will prove this result in Section~\ref{S:proof}.
Later we will  also prove  an alternative generalization of Theorem~\ref{thm:first} 
for the case $n>1$, stated as Theorem~\ref{thm:main},  which is the main result of the present article.  
The advantage of this formulation  over the original conjecture lies in the fact that it gives precise values of the coefficients $k_{d(d-j-3)}$.
 Theorem~\ref{cor:main} provides an equivalent statement of Theorem~\ref{thm:main}.

\begin{ack}
The authors are grateful to Matt Hedden, Jen Hom and Andr\'as N\'emethi for fruitful discussions. The first author wants to thank Indiana University for
hospitality. 
\end{ack}

\section{Background: Algebraic Geometry and Rational Cuspidal Curves}
In this section we will present some of the general theory of rational cuspidal curves.   Section~\ref{sec:singpoints} includes
basic information about singular points of plane curves. In Section~\ref{subsecsemi} we discuss the semigroup
of a singular point and its connections to the Alexander polynomial of the link. We shall use   results from this section later in
the article to simplify the equalities that we obtain. In Section~\ref{subseccusp} we describe results from \cite{FLMN04}
to give some flavor of the theory. In Section~\ref{subsecrough} we provide a rough sketch of some methods used to
study rational cuspidal curves. We refer to \cite{Moe08} for an excellent and fairly up-to-date survey of results on rational cuspidal curves.

\subsection{Singular points and algebraic curves}\label{sec:singpoints}
For a general introduction and references to this subsection, we refer to \cite{BK, GLS}, or
to \cite[Section 10]{Singpoints} for a more topological approach.   In this article we will be considering algebraic curves embedded in $\CP^2$.  Thus we will use the word  \emph{curve}  to refer to  a zero set of an irreducible 
homogeneous polynomial $F$ of degree $d$. The \emph{degree} of the curve is the degree of the corresponding
polynomial.

Let $C$ be a curve. A point $z\in C$ is called \emph{singular} if the gradient of $F$ vanishes at $z$. Singular points of irreducible
curves in $\CP^2$ are always isolated.
Given a singular point and a sufficiently
small ball $B\subset\CP^2$ around $z$, we call $K=C\cap\p B$ the \emph{link} of the singular point. The singular point is called \emph{cuspidal}
or \emph{unibranched} if $K$ is a knot, that is a link with one component, or equivalently, if there is an analytic map $\psi$ from a disk in $\C$ onto $C\cap B$.

Unless specified otherwise,   all singular points are assumed to be cuspidal.

Two unibranched singular points are called \emph{topologically equivalent} if the links of these singular points are isotopic;   see for instance
\cite[Definition I.3.30]{GLS} for more details.   A  unibranched singular point is topologically equivalent to one for which the local parametrization $\psi$ is given in local coordinates $(x,y)$ on $B$
  by $t\mapsto (x(t),y(t))$, where
$x(t)=t^p$, $y(t)=t^{q_1}+\ldots+t^{q_n}$ for some
positive integers $p,q_1,\ldots,q_n$ satisfying $p<q_1<q_2<\ldots<q_n$. Furthermore, if we set
$D_i=\gcd(p,q_1,\ldots,q_i)$, then $D_i$ does not divide $q_{i+1}$ and $D_n=1$. The sequence $(p;q_1,\ldots,q_n)$ is called the
\emph{characteristic sequence} of the singular point and $p$ is called the \emph{multiplicity}. Sometimes $n$ is referred to
as the \emph{number of Puiseux pairs}, a notion which comes from an alternative way of encoding the sequence $(p;q_1,\ldots,q_n)$.   We will say that a singular point is of {\it type} $(p;q_1,\ldots,q_n)$ if it has a presentation of this sort in local coefficients.

The link of a singular point with a characteristic sequence $(p;q_1,\ldots,q_n)$ is an $(n-1)$--fold iterate of a torus knot $T(p',q')$, where $p'=p/D_1$
and $q'=q_1/D_1$; see for example \cite[Sections 8.3 and 8.5]{BK} or  \cite[Chapter 5.2]{Wall04}. 
In particular, if $n=1$, the link is a torus knot $T(p,q_1)$. In all cases, the genus of the link is equal to $\mu/2=\delta$,
where $\mu$ is the Milnor number and $\delta$ is the so-called $\delta$--invariant of the singular point, see \cite[page 205]{GLS}, or \cite[Section 10]{Singpoints}.
The genus is also equal to half the degree of the Alexander polynomial of the link of the singular point.
The Milnor number can be computed from the following formula, see \cite[Remark 10.10]{Singpoints}:
\[\mu=(p-1)(q_1-1)+\sum_{i=2}^n (D_i-1)(q_i-q_{i-1}).\]

Suppose that $C$ is a degree $d$ curve  with singular points $z_1,\ldots,z_n$ (and $L_1,\ldots,L_n$ are their links). 
The genus formula, due to Serre
(see \cite[Property 10.4]{Singpoints})  states that the genus of $C$ is equal to
\[g(C)=\frac12(d-1)(d-2)-\sum_{i=1}^n\delta_i.\]
If all the critical points are cuspidal, we have $\delta_i=g(L_i)$, so the above formula can be written as
\begin{equation}\label{eq:gofC}
g(C)=\frac12(d-1)(d-2)-\sum_{i=1}^n g(L_i).
\end{equation}
In particular, $C$ is rational cuspidal (that is, it is a homeomorphic image of a sphere) if and only $\sum g(L_i)=\frac12(d-1)(d-2)$.

\subsection{Semigroup of a singular point}\label{subsecsemi}  The notion of the semigroup associated to 
a singular point is a central notion in the subject, although in the present work we use only the language of semigroups, not the algebraic aspects.  
We refer to \cite[Chapter 4]{Wall04} or \cite[page 214]{GLS} for details and proofs. Suppose that $z$ is a cuspidal singular point
of a curve $C$ and $B$ is a sufficiently small ball around $z$. Let $\psi(t)=(x(t),y(t))$ be a local parametrization of $C\cap B$ near $z$; see
Section~\ref{sec:singpoints}.
For any polynomial $G(x,y)$ we look at the order at $0$ of an analytic map $t\mapsto G(x(t),y(t))\in\C$. 
Let $S$ be the set integers, which can be realized as the order for some $G$. 
Then $S$ is clearly a semigroup of $\Z_{\ge 0}$. We call it the \emph{semigroup
of the singular point}. The semigroup can be computed from the characteristic sequence, for example for a sequence $(p;q_1)$, $S$ is generated by $p$ and $q_1$.
The \emph{gap sequence}, $G:=\Z_{\ge 0}\sm S$, has precisely $\mu/2$ elements and the largest one is $\mu-1$, where $\mu$ is the Milnor number.

We now assume that $K$ is the link of the singular point $z$.
Explicit computations of the Alexander polynomial of $K$ show
that it is of the form
\begin{equation}\label{eq:alex1}
\Delta_K(t)=\sum_{i=0}^{2m}(-1)^it^{n_i},
\end{equation}
where $n_i$ form an increasing sequence with $n_0 = 0$ and $n_{2m} = 2g$, twice the genus of $K$.

Expanding $t^{n_{2i}}-t^{n_{2i-1}}$ as $(t-1)(t^{n_{2i}-1}+t^{n_{2i}-2}+\ldots+t^{n_{2i-1}})$ yields
\begin{equation}\label{eq:alex2}
\Delta_K(t)=1+(t-1)\sum_{j=1}^k t^{g_j},
\end{equation}
for some finite sequence $0<g_1<\ldots<g_k$. We have the following result (see \cite[Exercise 5.7.7]{Wall04}).
\begin{lemma}\label{lem:gaplem}
The sequence $g_1,\ldots,g_k$ is the gap sequence of the semigroup of the singular point. In particular $k=\#G=\mu/2$, where $\mu$ is the Milnor number,
so $\#G$ is the genus.
\end{lemma}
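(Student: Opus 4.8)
The plan is to reduce the statement to a single identity between formal power series—namely that $\Delta_K(t)/(1-t)$ equals the Poincar\'e series of the semigroup—and then to prove that identity by induction on the number $n$ of Puiseux pairs, using the torus knot case as the base and the cabling (satellite) formula for the inductive step. All equalities below are read in $\Z[[t]]$.

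First I would record the elementary bookkeeping. Write $P(t)=\sum_{s\in S}t^s$ and let $G=\Z_{\ge0}\sm S$ be the gap set, with generating function $\sum_{\alpha\in G}t^\alpha$. Since $S$ contains every integer $\ge 2g$, the set $G$ is finite, and splitting $\sum_{N\ge0}t^N$ according to membership in $S$ gives $\sum_{\alpha\in G}t^\alpha=\tfrac1{1-t}-P(t)$. On the other hand \eqref{eq:alex2} reads $\sum_{j=1}^k t^{g_j}=\frac{\Delta_K(t)-1}{t-1}=\frac{1-\Delta_K(t)}{1-t}$. Comparing the two, the assertion $\{g_1,\dots,g_k\}=G$ is equivalent to $\frac{1-\Delta_K(t)}{1-t}=\frac1{1-t}-P(t)$, which after cancellation becomes
\begin{equation}\label{eq:semialex}
\frac{\Delta_K(t)}{1-t}=\sum_{s\in S}t^s.
\end{equation}
Once \eqref{eq:semialex} is established, matching coefficients of the two polynomials forces the exponents $g_j$ to be exactly the gaps, and the equality $k=\#G=\mu/2$ follows from the gap count already recorded above. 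So everything comes down to \eqref{eq:semialex}.

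For the base case $n=1$ the link is the torus knot $T(p,q)$ with $q=q_1$ and $S=\langle p,q\rangle$, where $p,q$ are coprime. From $\Delta_{T(p,q)}(t)=\frac{(t^{pq}-1)(t-1)}{(t^p-1)(t^q-1)}$ one gets $\frac{\Delta_{T(p,q)}(t)}{1-t}=\frac{1-t^{pq}}{(1-t^p)(1-t^q)}$, so it remains to identify this rational function with $P(t)$. This is the standard fact that each $s\in S$ has a unique expression $s=ap+bq$ with $0\le a\le q-1$ and $b\ge0$: expanding $\frac1{(1-t^p)(1-t^q)}=\sum_{a,b\ge0}t^{ap+bq}$ and multiplying by $1-t^{pq}$ removes precisely the redundancy $(a,b)\mapsto(a+q,\,b-p)$, leaving $\sum_{s\in S}t^s$. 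This proves \eqref{eq:semialex} when $n=1$.

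The inductive step is where I expect the real work to lie. For $n>1$ the knot $K$ is the $(a,b)$--cable of the algebraic knot $K'$ associated with the truncated characteristic sequence $(p;q_1,\dots,q_{n-1})$, for cable parameters $a,b$ read off from the sequence (with $a$ the winding number). The satellite formula for the Alexander polynomial gives $\Delta_K(t)=\Delta_{T(a,b)}(t)\,\Delta_{K'}(t^{a})$, and the induction hypothesis applied to $K'$ at the argument $t^a$ yields $\Delta_{K'}(t^{a})=(1-t^{a})\sum_{s'\in S'}t^{as'}$. Combining these with the torus factor computed above,
\[
\frac{\Delta_K(t)}{1-t}=\frac{1-t^{ab}}{(1-t^a)(1-t^b)}\,(1-t^a)\sum_{s'\in S'}t^{as'}=\frac{1-t^{ab}}{1-t^b}\sum_{s'\in S'}t^{as'},
\]
so \eqref{eq:semialex} for $K$ reduces to the purely combinatorial identity $\sum_{s\in S}t^s=\frac{1-t^{ab}}{1-t^b}\sum_{s'\in S'}t^{as'}$ for the semigroup of the iterated singularity. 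The main obstacle is exactly this: correctly matching the cable data $(a,b)$ to the characteristic sequence so that the cable is the algebraic knot in question, and then verifying the semigroup recursion $P_S(t)=\frac{1-t^{ab}}{1-t^b}P_{S'}(t^a)$ (equivalently, that $S$ is generated by $aS'$ together with the new generator $b$, with the stated Poincar\'e series). Granting this recursion, the displayed computation closes the induction and proves \eqref{eq:semialex} in general, hence the lemma.
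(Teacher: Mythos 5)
Your attempt cannot be compared against a proof in the paper, because the paper gives none: Lemma~\ref{lem:gaplem} is simply quoted from \cite[Exercise 5.7.7]{Wall04}. Judged as a self-contained argument, the first half of your proposal is correct and well organized. The reduction of the lemma to the power-series identity $\Delta_K(t)=(1-t)\sum_{s\in S}t^s$ is exactly right (both sides of your comparison are sums of distinct monomials with coefficient $1$, so equality of series does force equality of exponent sets), the torus-knot base case via the unique representation $s=ap+bq$ with $0\le a\le q-1$, $b\ge 0$ is complete, and granted the identity, $k=\#G$ is immediate while $\#G=\mu/2$ is the standard semigroup fact recorded in Section~\ref{subsecsemi}.

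The genuine gap is the inductive step, and you flag it yourself: the two claims you ``grant'' carry essentially all of the remaining content of the lemma. First, the companion is not the link of $(p;q_1,\dots,q_{n-1})$ as written --- that sequence has $\gcd=D_{n-1}>1$, so it is the characteristic sequence of a link with several components, not of a knot; the companion $K'$ must be the link of $(p/D_{n-1};\,q_1/D_{n-1},\dots,q_{n-1}/D_{n-1})$, and the correct cable parameters are $a=D_{n-1}$ and $b=\bar\beta_n$, the new minimal generator of $S$, given recursively by $\bar\beta_n=q_n-q_{n-1}+(D_{n-2}/D_{n-1})\,\bar\beta_{n-1}$; in particular $b$ is \emph{not} $q_n$ and cannot just be ``read off''. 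Second, the recursion $P_S(t)=\bigl(1+t^b+\cdots+t^{(a-1)b}\bigr)P_{S'}(t^a)$ is equivalent to the unique-representation theorem for these semigroups: every $s\in S$ is uniquely of the form $as'+jb$ with $s'\in S'$ and $0\le j<a$ (disjointness of the cosets uses $\gcd(a,b)=1$). Both facts are true and standard --- they are developed in \cite[Chapters 4--5]{Wall04} and \cite{BK}, and indeed your outline is the standard route to the theorem that the Alexander polynomial of an algebraic knot equals $(1-t)$ times the Poincar\'e series of its semigroup --- so your strategy does close. But as written, the proposal defers to these two unproven claims precisely the substance of the lemma; a complete write-up must prove or cite them (together with the satellite formula $\Delta_K(t)=\Delta_{T(a,b)}(t)\,\Delta_{K'}(t^a)$, which is citable), after which your displayed two-line computation finishes the induction.
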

Writing $t^{g_j}$ as $(t-1)(t^{g_j-1}+t^{g_j-2}+\ldots+t+1)+1$ in \eqref{eq:alex2} yields the following formula
\begin{equation}\label{eq:alex3}
\Delta_K(t)=1+(t-1)g(K)+(t-1)^2\sum_{j=0}^{\mu-2}k_jt^{j},
\end{equation}
where $k_j=\#\{m>j\colon m\not\in S\}$.

We shall use the following definition.
\begin{definition}\label{def:iofg}
For any finite increasing sequence of positive integers $G $, we  define  
\begin{equation}\label{eq:gapfunction}
I_G(m)  =  \#\{ k \in G\cup\Z_{<0} \colon k \ge m\},
\end{equation}
where $\Z_{<0}$ is the set of the negative integers. We shall call $I_G$ the \emph{gap function}, because in most applications $G$ will be a gap
sequence of some semigroup.
\end{definition}
\begin{remark}\label{rem:kj}
We point out that for $j=0,\ldots,\mu-2$, we have $I_G(j+1)=k_j$, where the $k_j$ are as in \eqref{eq:alex3}.
\end{remark}
\begin{example}
Consider the knot $ T(3,7)$.  Its  Alexander polynomial is
\begin{align*}
\frac{(t^{21}-1)(t-1)}{(t^3-1)(t^7-1)}   =&\    1-t+t^3-t^4+t^6-t^8+t^9-t^{11}+t^{12} \\
   =&\ 1+(t-1)(t+t^2+t^4+t^5+t^8+t^{11}) \\=&
\ 1+6(t-1)+\\
+(t-1)^2&\left(6+5t+4t^2+4t^3+3t^4+2t^5+2t^6+2t^7+t^8+t^9+t^{10}\right).
\end{align*}
The semigroup is $(0,3,6,7,9,10,12,13,14,\dots)$. The gap sequence is $1,2,4,5,8,11$.
\end{example}

\begin{remark}\label{rem:gapofk}
The passage from \eqref{eq:alex1} through \eqref{eq:alex2} to \eqref{eq:alex3} is just an algebraic manipulation, and thus it applies to  any knot  whose
Alexander polynomial has form \eqref{eq:alex1}.  In particular, according to~\cite[Theorem 1.2]{OzSz05} it applies    to  any $L$--space knot. 
In this setting we will also call the sequence $g_1,\dots,g_k$ 
the \emph{gap sequence} of the knot and denote it by $G_K$; we will write $I_K(m)$ for the gap function relative to $G_K$.
 Even though the complement  $\Z_{\ge 0}\sm G_K$ is not always a semigroup, we still 
have $\#G_K=\frac12\deg\Delta_K$. This property follows  immediately from the symmetry of the Alexander
polynomial.
\end{remark}

\subsection{Rational cuspidal curves with one cusp}\label{subseccusp}
The classification of rational cuspidal curves is a challenging old problem, with some conjectures (like the Coolidge--Nagata conjecture \cite{Cool28,Naga60}) 
remaining open for many decades. The classification of curves with a unique critical 
point is far from being accomplished; 
the special case when the unique singular point has only one Puiseux term (its link is a torus knot) is complete~\cite{FLMN04}, but even in this basic case, 
the proof is quite difficult.

To give some indication  of the situation, consider two families of rational cuspidal curves. The first one, written in projective coordinates on $\CP^2$
as $x^d+y^{d-1}z=0$ for $d>1$,  the other one is $(zy-x^2)^{d/2}-xy^{d-1}=0$ for $d$ even and $d>1$.  These are of degree $d$. Both families have a unique
singular point, in the first case it is of type $(d-1;d)$, in the second of type $(d/2;2d-1)$. In both cases, the Milnor number is $(d-1)(d-2)$, so the curves are rational.
An explicit parametrization can be easily given as well.

There also  exist 
more complicated examples.  For instance,    Orevkov~\cite{Orev02} constructed rational cuspidal curves of degree $\phi_j$ having a single singular point
of type $(\phi_{j-2};\phi_{j+2})$, where $j$ is odd and $j>5$. Here the $\phi_j$ are the Fibonacci numbers, $\phi_0=0$, $\phi_1=1$, $\phi_{j+2}=\phi_{j+1}+\phi_j$.
As an example, there exists a rational cuspidal curve of degree $13$ with a single singular point of type $(5;34)$. Orevkov's construction is inductive and
by no means trivial. Another family found by Orevkov 
are rational cuspidal curves of degree $\phi_{j-1}^2-1$ having a single singular point of type $(\phi_{j-2}^2;\phi_j^2)$,
for $j>5$, odd.

The main result of \cite{FLMN04} is that apart of these four families of rational cuspidal curves, there are only two sporadic curves with
a unique singular point having one Puiseux pair, one of degree $8$, 
the other   of degree $16$. 

\subsection{Constraints on rational cuspidal curves.}\label{subsecrough}
Here we review some constraints for rational cuspidal curves. We refer to \cite{Moe08} for more details and references. 
The article \cite{FLMN04} shows how these constraints can be used in practice. The fundamental constraint is  given by \eqref{eq:gofC}.
Next, Matsuoka and Sakai \cite{MaSa89}
proved that if $(p_1;q_{11},\ldots,q_{1k_1})$, \dots,$(p_n;q_{n1},\ldots,q_{nk_n})$ are the only  singular points occurring on a rational cuspidal curve of degree $d$
with $p_1\ge \ldots\ge p_n$, then $p_1>d/3$. Later, Orevkov \cite{Orev02} improved this to  $\alpha(p_1+1)+1/\sqrt{5}>d$, where $\alpha=(3+\sqrt{5})/2\sim 2.61$
and showed that this inequality is asymptotically optimal (it is related to the curves described   in Section~\ref{sec:singpoints}).
Both proofs use  very deep algebro-geometric tools. We reprove the result of \cite{MaSa89} in Proposition~\ref{prop:matsa} below.

Another obstruction comes from the semicontinuity of the spectrum, a concept that arises from Hodge Theory. 
Even a rough definition of the spectrum of a singular point is beyond the scope of this article. We refer to
\cite[Chapter 14]{AVG} for a definition  of the spectrum  and to \cite{FLMN04} for illustrations of its use. We point out that recently (see \cite{BN})
a tight relation has been drawn between the spectrum of a singular point and the Tristram--Levine signatures of its link. 
In general, semicontinuity of the spectrum is a very strong tool, but
it is also very difficult to apply.

Using tools from algebraic geometry, such as the Hodge Index Theorem, Tono in \cite{Tono05} proved that any rational cuspidal curve can have at most eight
singular points. An old conjecture is that a rational cuspidal curve can have at most $4$ singular points; see \cite{Pion}
for a precise statement.

In \cite{FLMN06} a completely new approach was proposed, motivated   by a  conjecture on Seiberg--Witten
invariants of links of surface singularities made by N\'emethi and Nicolaescu; see \cite{NeNi05}. Specifically,  
Conjecture~\ref{conj:one} in the present article arises from these considerations. 
Another reference for the general conjecture on Seiberg--Witten invariants is~\cite{Nem08}.
 
\section{Topology, algebraic topology, and \Spc{} structures}

Let $C\subset\CP^2$ be a rational cuspidal curve. Let $d$ be its degree and $z_1,\ldots,z_n$ be its singular points.
We let $Y$ be a closed manifold regular  neighborhood of $C$, let $M = \partial Y$, and $W = \overline {\CP^2 - Y}$.
\subsection{ Topological descriptions of $Y$ and $ M$}
The neighborhood $Y$ of $C$ can be built  in three steps.  First, disk neighborhoods of the $z_i$ are selected.  Then neighborhoods of $N-1$ embedded arcs on $C$ are adjoined, yielding a 4--ball.  Finally, the remainder of $C$ is a disk, so its neighborhood forms a 2--handle attached to the 4--ball.  Thus, $Y$ is a 4--ball  
with a 2--handle attached.  The attaching curve is easily seen to be $K = \# K_i$.   Finally, since the self-intersection of $C$ is $d^2$, the framing of the attaching map is $d^2$.   In particular, $M = S^3_{d^2}(K)$.

\smallskip
One quickly computes that  $ H_2(\CP^2, C) = \Z_d$, and $H_4(\CP^2, C) = \Z$, with the remaining homology groups 0.   Using excision, we see that  the groups $H_i( W, M)$ are the same.   Via Lefschetz duality and the universal coefficient theorem we find that $H_0(W) = \Z$, $H_1(W) = \Z_d$ and all the other groups are 0. Finally, the long exact sequence of the pair $(W, M)$ yields 
$$0 \to H_2(W, M) \to H_1(M) \to H_1(W) \to 0$$ which in this case is $$0 \to \Z_d \to \Z_{d^2} \to \Z_d \to 0.$$  

This is realized geometrically by letting the generator of $H_2(W,M)$ be ${\it H} \cap W$, where $\it H\subset\CP^2$ is a generic line.  
Its boundary is algebraically $d$ copies of the meridian of the attaching curve $K$ in the 2--handle decomposition of $Y$.

Taking duals we see that the map $H^2(W) \to H^2(M)$, which maps $\Z_d \to \Z_{d^2}$, takes the canonical generator to $d$ times the dual to the meridian in $M = S^3_{d^2}(K)$.

\subsection{ \Spc{} structures}

For any space $X$ there  is a  transitive action of $H^2(X)$ on  \Spc($X$).  Thus, $W$ has $d$ \Spc  structures and $M$ has $d^2$ such structures.  

Since $\CP^2$ has a \Spc  structure  with first Chern class a dual to the class of the line, 
its restriction to $W$ is a structure whose restriction to $M$ has first Chern class equal to $d$ times the dual to the meridian.  

For a cohomology class $z \in H^2(X)$ and a \Spc   structure $\spinc$, one has $c_1(z\cdot \spinc)- c_1(\spinc) = 2z$.  Thus for each $k\in\Z$, there is a \Spc  structure on $M$ which extends to $W$ having first Chern class of the form $d +2kd$.  Notice that for $d$ odd, all $md \in \Z_{d^2}$ for $m\in\Z$ occur as first Chern classes of   \Spc  structures that extend over $W$, but for $d$ even, only elements of the form $md$ with $m$ odd occur.  (Thus, for $d$ even, there are $d$ extending structures, but only $d/2$ first Chern classes that occur.) 

According to~\cite[Section 3.4]{OzSz04}, the \Spc  structures on $M$ have an enumeration $\spinc_m$, for $m \in  [ -d^2/2 , d^2/2]$, which can be defined via the manifold $Y$.  Specifically, $\spinc_m$ is defined to be the restriction to $M$ of the \Spc  structure on $Y$, $\spinct_m$, with the property that $\left< c_1(\spinct_m), C\right> + d^2 = 2m$. We point out that if $d$ is even, $\spinc_{d^2/2}$ and $\spinc_{-d^2/2}$ denote the same structure; compare Remark~\ref{rem:same} below.

It now follows from our previous observations that the structures $\spinc_m$ that extend to $W$ are those   with $m = kd$ for some integer $k$,  $-d/2 \le k \le d/2$ if $d$ is odd.  If $d$ is even, then those that extend have $m = kd/2$ for some odd $k$,  $-d\le k \le d$.  For future reference, we summarize this with the following lemma.

\begin{lemma}\label{extendinglemma}  If $W^4  = \overline{  \CP^2 - Y}$  where $Y$ is a neighborhood of a rational cuspidal curve $C$ of degree $d$ (as constructed
above), then the \Spc  structure $\spinc_m$ on $\partial  W^4$ extends to $W^4$ if     $m = kd$ for some integer $k$,  $-d/2 \le k \le d/2$ if $d$ is odd.  If $d$ is even, then those that extend have $m = kd/2$ for some odd $k$,  $-d\le k \le d$.   Here $\spinc_m$ is the \Spc  structure on $\partial W$  which extends to a structure $\spinct$ on $Y$ satisfying  $\left< c_1(\spinct_m), C\right> + d^2 = 2m$.

\end{lemma}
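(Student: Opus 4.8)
The plan is to reconcile the two parametrizations of the \Spc structures on $M=S^3_{d^2}(K)$: the intrinsic enumeration $\spinc_m$ defined through $Y$, and the extrinsic condition of extending across $W$. Both \Spc($M$) and \Spc($W$) are torsors over $H^2(M)\cong\Z_{d^2}$ and $H^2(W)\cong\Z_d$ respectively, and restriction is equivariant over $r_W\colon H^2(W)\to H^2(M)$; the lemma will follow by tracking a single generator of $H^2(M)$ through these torsors.

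First I would compute $c_1(\spinc_m)$. Writing $H^2(Y)=\Z\gamma$ with $\langle\gamma,[C]\rangle=1$, the defining relation $\langle c_1(\spinct_m),[C]\rangle=2m-d^2$ gives $c_1(\spinct_m)=(2m-d^2)\gamma$, so $\spinct_{m+1}=\gamma\cdot\spinct_m$ in \Spc($Y$). In the long exact sequence of $(Y,M)$ the map $H^2(Y,M)\to H^2(Y)$ is multiplication by the self-intersection $d^2$, so the restriction $r_Y\colon H^2(Y)\to H^2(M)$ is surjective with kernel $d^2\Z\gamma$, and $g:=r_Y(\gamma)$ generates $H^2(M)\cong\Z_{d^2}$. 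Restricting then yields $c_1(\spinc_m)=(2m-d^2)g=2mg$ and $\spinc_{m+1}=g\cdot\spinc_m$, so $g$ acts on the enumeration by $m\mapsto m+1$ and the subgroup $\langle dg\rangle$ by $m\mapsto m+dk$.

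Next I would realize the extending structures as one orbit. Since restriction \Spc($W$)$\to$\Spc($M$) is equivariant over $r_W$, and $r_W$ is injective with image $\langle d\cdot(\text{dual meridian})\rangle=\langle dg\rangle\cong\Z_d$ as recorded above, the set of structures extending across $W$ is a single $\langle dg\rangle$–orbit, provided it is nonempty. Nonemptiness is the geometric input: the \Spc structure of $\CP^2$ restricts to a structure $\spinc^0$ on $M$ with $c_1(\spinc^0)=d\cdot(\text{dual meridian})$, a $d$–fold multiple of a generator. Comparing with $c_1(\spinc_m)=2mg$ locates $\spinc^0$ in the enumeration: solving $2m\equiv d\cdot u\pmod{d^2}$ for a unit $u$ forces $m\equiv0\pmod d$ when $d$ is odd and $m\equiv d/2\pmod d$ when $d$ is even. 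The $\langle dg\rangle$–action $m\mapsto m+dk$ then produces the orbit $\{\spinc_{dk}\}$ for $d$ odd and $\{\spinc_{(d/2)(2k+1)}\}$ for $d$ even, which are exactly the sets $m=kd$ and $m=kd/2$ with $k$ odd in the statement. Injectivity of $r_W$ makes restriction injective on \Spc structures, so $W$ contributes precisely $|H^2(W)|=d$ distinct extending structures, matching the $d$ residues in each orbit and turning the stated \emph{if} into an exact description.

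I expect the even case to be the main obstacle, since there $c_1$ no longer determines a \Spc structure: the map \Spc($M$)$\to H^2(M)$ is two-to-one, so $c_1(\spinc^0)=d\cdot(\text{dual meridian})$ does not by itself single out $\spinc^0$. The point to argue carefully is that this ambiguity is harmless. The two preimages differ by the unique $2$–torsion element of $\Z_{d^2}$, namely $(d^2/2)g=(d/2)(dg)$, which lies in $\langle dg\rangle$; hence both preimages belong to the same $\langle dg\rangle$–orbit, and the family of extending structures is well defined independently of the choice. A smaller point is that replacing $g=r_Y(\gamma)$ by the dual to the meridian changes it only by a unit of $\Z_{d^2}$, which affects neither $\langle dg\rangle$ nor the residue of $m$ modulo $d$ above, so the identification of the two parametrizations is insensitive to these choices.
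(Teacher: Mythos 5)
Your proof is correct and takes essentially the same route as the paper: compute the image of $H^2(W)\to H^2(M)$, use the restriction of the \Spc{} structure of $\CP^2$ as the anchor extending structure, and compare first Chern classes against the $Y$--based enumeration $\spinc_m$. The one point where you go beyond the paper's terse summary is in explicitly resolving the two-to-one ambiguity of $c_1$ when $d$ is even, by observing that the nonzero $2$--torsion element $(d^2/2)g=(d/2)\cdot dg$ lies in the image subgroup $\langle dg\rangle$, so both preimages sit in the same orbit --- a detail the paper leaves implicit.
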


\section{Heegaard Floer theory}\label{S:HFK}

Heegaard Floer theory~\cite{OzSz03} associates to a 3--manifold $M$ with  \Spc  structure $\spinc$, a filtered, graded chain complex $CF^\infty(M,\spinc)$ over the field $\F$.  A fundamental invariant of the pair $(M, \spinc)$, the {\it correction term} or $d$--invariant, $d(M,\spinc) \in \Q$, is determined by $CF^\infty(M,\spinc)$.  The manifold $M$ is called an {\it $L$--space} if certain associated homology groups are of rank one~\cite{OzSz05}.

A knot  $K$ in $M$ provides a second filtration on $CF^\infty(M,\spinc)$~\cite{OzSz03}.  In particular, for $K   \subset S^3$  there is a bifiltered graded chain complex $\CFK^\infty(K)$ over the field $\F$.  It is known that for algebraic knots  the complex is determined by the Alexander polynomial of $K$.  More generally, this holds for any knot upon which some surgery yields an $L$--space; these knots are called $L$--space knots.

The Heegaard Floer invariants of surgery on $K$, 
in particular the $d$--invariants of $S^3_{q}(K)$, are determined by this complex, and for  $q>2(\textrm{genus}(K))$  
the computation of $d(S^3_q(K), \spinc)$ from $CFK^\infty(K)$ is particularly simple.  In this section we  will illustrate the general theory, leaving the details to references such 
as~\cite{HHN12,HLR12}.

\subsection{$\CFK^\infty(K)$ for $K$ an algebraic knot}
Figure~\ref{fig:staircasefig}  is a schematic illustration of a finite complex over $\F$.  Each dot represents a generator and the arrows indicate boundary maps.  Abstractly it is of the form $0 \to \F^4 \to \F^5 \to 0$ with homology $\F$.   The complex is bifiltered with the horizontal and vertical coordinates representing the filtrations levels of the generators.  We will refer to the two filtrations levels as the $(i,j)$--filtrations levels.   The complex has an absolute grading which is not indicated in the diagram; the generator at filtration level $(0,6)$ has grading 0 and the boundary map lowers the
grading by 1.  Thus, there are five generators at grading level 0 and four at grading level one.  We call the first set of generators type {\bf A} and the second type {\bf B}. 

We will refer to a complex such as this as a {\it staircase complex} of length $n$, $\St(v)$, where $v$ is a $(n-1)$--tuple of positive integers designating the length of the segments starting at the top left and moving to the bottom right in alternating right and downward steps.  Furthermore we require
that the top left vertex lies on the vertical axis and the bottom right vertex lies on the horizontal axis. 
Thus, the illustration is of $\St(1,2,1,2,2,1,2,1)$.
The absolute grading
of $\St(v)$ is defined by setting the grading of the top left generator to be equal to $0$ and the boundary map to lower the grading by $1$.

 The vertices of $\St(K)$ will be denoted $\Vrt(St(K))$. We shall write $\Vrt_A(\St(K))$ to denote the set of  type {\bf A} vertices and write $\Vrt_B(\St(K))$ for the set of  vertices of type {\bf B}.

If $K$ is a knot admitting an $L$--space surgery, in particular an algebraic knot (see \cite{Hed09}),
then it has Alexander polynomial of the form $\Delta_K(t) = \sum_{i=0}^{2m} (-1)t^{n_i}$.
To such a knot we associate a staircase complex, $\St(K)=\St(n_{i+1} - n_i)$, where $i$ runs from 0 to $2m-1$.  As an example, the torus knot $T(3,7)$ has Alexander polynomial $1 -t +t^3 -t^4 + t^6 -t^8 +t^9 - t^{11} +t^{12}$.  The corresponding staircase complex is  $\St(1,2,1,2,2,1,2,1)$.

\begin{figure}[t]
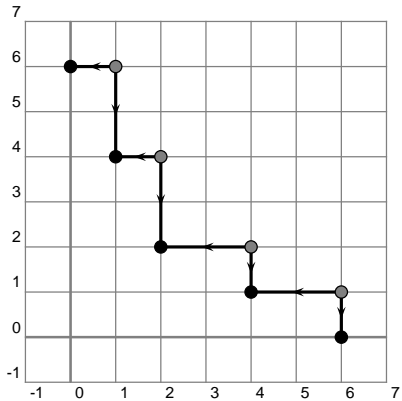

\staircasefig
\caption{The staircase complex $\St(K)$ for the torus knot $T(3,7)$. }\label{fig:staircasefig}
\end{figure}

Given any finitely generated bifiltered complex $S$, one can form a larger complex $S \otimes \F[U,U^{-1}]$, with differentials defined by $\partial( x \otimes U^i )=( \partial x) \otimes U^i$.  It is graded by $gr (x \otimes U^k ) = gr(x) - 2k$.  Similarly, if $x$ is at filtration level $(i,j)$, then $x \otimes U^i$ is at filtration level $(i - k, j - k)$.  If $K$ admits an $L$--space surgery, then $\St(K)\otimes\F[U,U^{-1}]$ is isomorphic to $\CFK^\infty(K)$.  
Figure~\ref{infinitestaircasefig} illustrates a portion of  $\St(T(3,7)) \otimes \F[U,U^{-1}]$; that is, a portion of the Heegaard Floer complex
$\CFK^\infty(T(3,7))$.

\begin{figure}
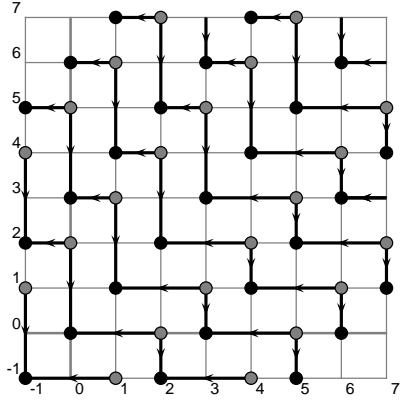

\infinitestaircasefig
\caption{A portion of $\CFK^\infty(T(3,7))$.  }\label{infinitestaircasefig}
\end{figure}

\subsection{$d$--invariants from $\CFK^\infty(K)$.}  We will not present the general definition of the $d$--invariant of a 3--manifold 
with \Spc  structure; details can be found in~\cite{OzSz03}.  
However, in the case that a 3--manifold is of the form $S^3_{q}(K)$ where $q \ge 2($genus($K$)), 
there is a simple algorithm (originating from \cite[Section 4]{OzSz04}, we use the approach of \cite{HHN12,HLR12})
to determine this invariant from $\CFK^\infty(K)$.

If $m$ satisfies $-d/2 \le m \le d/2$, one can form the quotient complex $$\CFK^\infty(K) /\CFK^\infty(K)\{i <0, j <m\}.$$
We let $d_m$ denote the least grading in which this complex has a nontrivial homology class, say $[z]$, where $[z]$ must satisfy 
the added constraint that for all $i>0$,  $[z] = U^i [ z_i]$ for some homology class $[z_i]$ of grading $d_m + 2i$.

In~\cite[Theorem 4.4]{OzSz04}, we find the following result.

\begin{theorem}\label{dinvthm} For the \Spc  structure $\spinc_m$, $d(S^3_{q}(K), \spinc_m) = d_m + \frac{ (-2m +q)^2 - q}{4q}$.

\end{theorem} 

\subsection{From staircase complexes to the $d$--invariants}

Let us now define a distance function for a staircase complex by the formula
\[
J_K(m)=\min_{(v_1, v_2)\in \Vrt(\St(K))} \max(v_1,v_2-m),
\]
where $v_1,v_2$ are coordinates of the vertex $v$. Observe that
the minimum can always be   taken with respect to the set of vertices of type {\bf A}.
 The function $J_K(m)$ represents the greatest $r$ such that the region $\{i\le 0, j \le m\}$ intersects $\St(K) \otimes U^{r}$ nontrivially.  
It is immediately clear that $J_K(m)$ is a non-increasing function. It is also immediate  that for $m\ge g$ we have $J_K(m)=0$.

\begin{figure}
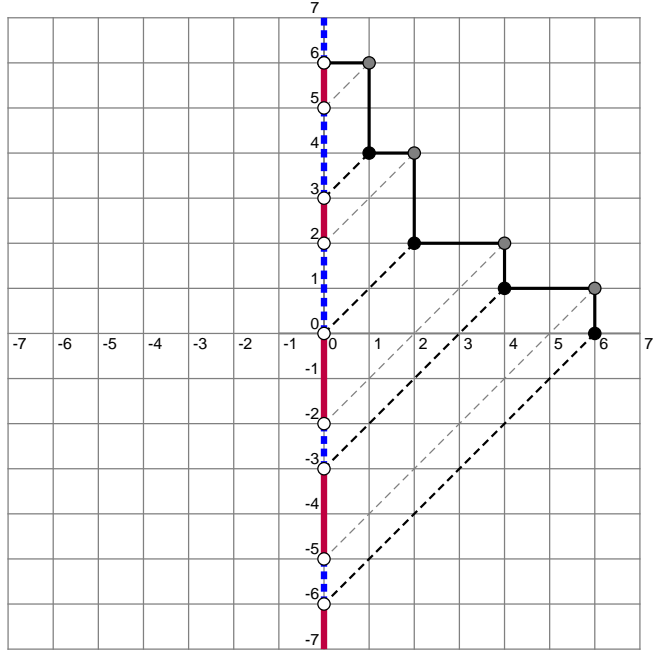

\functionJm
\caption{The function $J(m)$ for the knot $T(3,7)$. 
When $(0,m)$ lies on the dashed vertical intervals, the function $J(m)$
is constant; when it is on solid vertical intervals the function $J(m)$ is decreasing. The dashed lines connecting vertices to points on the vertical axis
indicate how the ends of dashed and solid intervals are constructed. }\label{fig:two}
\end{figure}

For the sake of the next lemma we define $n_{-1}=-\infty$.
\begin{lemma}\label{growththm}
Suppose $m\le g$. We have $J_K(m+1)-J_K(m)=-1$ if $n_{2i-1}-g\le m< n_{2i}-g$ for some $i$, and $J_K(m+1)=J_K(m)$ otherwise.
\end{lemma}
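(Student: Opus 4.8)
The plan is to make the vertices of $\St(K)$ completely explicit and then analyze the piecewise-linear lower envelope that defines $J_K$. Writing $\Delta_K(t)=\sum_{i=0}^{2N}(-1)^it^{n_i}$ and recalling that $\St(K)=\St(n_{i+1}-n_i)$, I would label the type {\bf A} vertices $V_0,\dots,V_N$ from top-left to bottom-right, so that $V_0=(0,g)$ and $V_N=(g,0)$. Reading off the alternating horizontal and vertical steps gives $V_k=(a_k,b_k)$ with
\[
a_k=\sum_{l=1}^{k}(n_{2l-1}-n_{2l-2}),\qquad b_k=g-\sum_{l=1}^{k}(n_{2l}-n_{2l-1}).
\]
From these I would record the three identities that drive everything: $b_k-a_k=g-n_{2k}$, together with $a_{k+1}-a_k=n_{2k+1}-n_{2k}$ and $b_{k-1}-b_k=n_{2k}-n_{2k-1}$. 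In particular $a_k$ is strictly increasing and $b_k$ strictly decreasing in $k$.

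Next I would derive a closed form for $J_K(m)$. For $m\le g$ let $k^\ast=k^\ast(m)$ be the largest index with $n_{2k}\le g-m$ (this exists since $n_0=0$). For $k\le k^\ast$ the identity $b_k-a_k=g-n_{2k}\ge m$ forces $\max(a_k,b_k-m)=b_k-m$, which over this range is minimized at $k=k^\ast$ because $b_k$ decreases; for $k\ge k^\ast+1$ one has $\max(a_k,b_k-m)=a_k$, minimized at $k=k^\ast+1$ because $a_k$ increases. Since the minimum defining $J_K$ is attained on type {\bf A} vertices, as observed above, this yields
\[
J_K(m)=\min\bigl(b_{k^\ast}-m,\ a_{k^\ast+1}\bigr),
\]
with the convention $a_{N+1}=+\infty$ when $k^\ast=N$.

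With this formula in hand, computing $J_K(m+1)-J_K(m)$ reduces to comparing $k^\ast(m+1)$ with $k^\ast(m)$; since the $n_{2k}$ are strictly increasing, $k^\ast$ drops by exactly one when $g-m=n_{2k^\ast}$ and is otherwise unchanged. If $k^\ast$ is unchanged, substituting the identities shows that $J_K$ decreases by one precisely when $b_{k^\ast}-m\le a_{k^\ast+1}$, which simplifies to $g-m\le n_{2k^\ast+1}$; combined with $n_{2k^\ast}<g-m$ this pins the decrease to the window $n_{2k}<g-m\le n_{2k+1}$. If instead $g-m=n_{2k^\ast}$, a short computation using $a_{k+1}-a_k=n_{2k+1}-n_{2k}$ and $b_{k-1}-b_k=n_{2k}-n_{2k-1}$ shows $J_K(m+1)=a_{k^\ast}=J_K(m)$, so there is no decrease. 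Hence $J_K(m+1)-J_K(m)=-1$ exactly when $g-m\in(n_{2k},n_{2k+1}]$ for some $0\le k\le N-1$, or when $g-m>n_{2N}$, in which case $k^\ast=N$ and the convention $a_{N+1}=+\infty$ always forces a decrease.

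Finally I would convert this to the form in the statement using the symmetry $n_j+n_{2N-j}=2g$ of the Alexander polynomial (Remark~\ref{rem:gapofk}): the condition $n_{2k}<g-m\le n_{2k+1}$ becomes $n_{2i-1}-g\le m<n_{2i}-g$ with $i=N-k$, and the leftover range $g-m>n_{2N}$, i.e.\ $m<-g$, corresponds to the index $i=0$ under the convention $n_{-1}=-\infty$. The routine parts are the step-reading for the vertex coordinates and the linear algebra in each case. The main obstacle is the bookkeeping in the jump case $g-m=n_{2k^\ast}$, where the index of the minimizing vertex changes and one must verify that the newly activated vertex $V_{k^\ast-1}$ does not itself produce a decrease; this is the single place where the argument genuinely uses that every vertical gap $n_{2k}-n_{2k-1}$ is positive.
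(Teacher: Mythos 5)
Your proof is correct, and at its core it is the same kind of argument as the paper's: a purely combinatorial analysis of the min--max over the type \textbf{A} vertices of the staircase, driven by the monotonicity of the two coordinate sequences and the identities $a_k-b_k=n_{2k}-g$, $a_k-b_{k-1}=n_{2k-1}-g$. The organization, however, differs in two ways worth recording. First, the paper fixes a vertex $v_i$ and determines (by pairwise comparison of consecutive vertices) the interval of $m$ on which $v_i$ realizes the minimum, then asks where on that interval the max is taken by the $m$--dependent coordinate; you instead prove the closed form $J_K(m)=\min\bigl(b_{k^\ast}-m,\,a_{k^\ast+1}\bigr)$ with a threshold index $k^\ast(m)$ and track how $k^\ast$ jumps. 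Second, and more substantively, your parametrization produces the decrease windows as $n_{2k}<g-m\le n_{2k+1}$, so you genuinely need the symmetry $n_j+n_{2N-j}=2g$ of the Alexander polynomial to convert them into the stated form $n_{2i-1}-g\le m<n_{2i}-g$; the paper's indexing lands on the stated windows directly, but only because its two displayed ``facts'' carry sign slips that this same symmetry reconciles, so your explicit appeal to it is arguably the more careful treatment. One boundary case should be patched: in your jump case $g-m=n_{2k^\ast}$ with $k^\ast=0$ (i.e.\ $m=g$), the vertex $V_{k^\ast-1}$ does not exist; here one simply uses the observation, made in the paper just before the lemma, that $J_K(m)=0$ for all $m\ge g$, so indeed there is no decrease. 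With that trivial case noted, the argument is complete.
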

\begin{proof}
The proof is purely combinatorial. 
We order the type {\bf A} vertices of $\St(K)$ so that the first coordinate is increasing, and we denote these vertices  $v_0,\ldots,v_k$. For example, 
for $\St(T(3,7))$ as depicted on Figure~\ref{fig:staircasefig}, we have $v_0=(0,6)$, $v_1=(1,4)$, $v_2=(2,2)$,
$v_3=(4,1)$ and $v_4=(6,0)$. We denote by $(v_{i1},v_{i2})$ the coordinates of the vertex $v_i$.

A verification of the two following facts is straightforward:
\begin{equation}\label{eq:twosimple}
\begin{split}
\max(v_{i1},v_{i2}-m)&=v_{i1} \textrm{ if and only if   $m\ge v_{i1}-v_{i2}$}\\
\max(v_{i1},v_{i2}-m)&\ge \max(v_{i-1,1},v_{i-1,2}-m) \textrm{ if and only if  $m\le v_{i1}-v_{i-1,2}$}.
\end{split}
\end{equation}
By the definition of the staircase complex we also  have  $v_{i1}-v_{i2}=n_{2i}-g$ and $v_{i1}-v_{i-1,2}=n_{2i-1}-g$.
The second equation of \eqref{eq:twosimple} yields
\[
J_K(m)=\max(v_{i1},v_{i2}-m) \text{ if and only if } m\in[n_{2i-1},n_{2i+1}].
\]
Then the first equation of \eqref{eq:twosimple} allows to compute the difference $J_K(m+1)-J_K(m)$.
\end{proof}

The relationship between $J_K$ and the $d$--invariant is given by the next result.
\begin{proposition}\label{prop:dofalg}
Let $K$ be an algebraic knot, let $q>2g(K)$, and let $m\in[-q/2,q/2]$ be an integer. Then  \[d(S^3_q(K),\mathfrak{s}_m)=\frac{(-2m+q)^2-q}{4q}-2J(m).\]
\end{proposition}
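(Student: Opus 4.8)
The plan is to combine Theorem~\ref{dinvthm} with a direct identification of the quantity $d_m$ appearing there as $-2J_K(m)$. Recall that Theorem~\ref{dinvthm} states
\[
d(S^3_q(K),\spinc_m) = d_m + \frac{(-2m+q)^2 - q}{4q},
\]
where $d_m$ is the least grading of a ``$U$--tower'' generator in the quotient complex $\CFK^\infty(K)/\CFK^\infty(K)\{i<0,\, j<m\}$. Since $q > 2g(K)$ is exactly the regime in which this algorithm is valid, the whole proposition reduces to the single identity $d_m = -2J_K(m)$. So the first thing I would do is reduce to this statement and spend the rest of the argument on it.

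\medskip

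To prove $d_m = -2J_K(m)$, I would track the image of the bottom of the $U$--tower under projection to the quotient complex. Since $\CFK^\infty(K) \cong \St(K)\otimes \F[U,U^{-1}]$ has homology $\F[U,U^{-1}]$, its generator can be taken to be the sum of the type {\bf A} vertices (the cycle surviving in homology), and the $U$--tower consists of the translates $\St(K)\otimes U^r$ of this cycle for $r\in\Z$. The passage to the quotient by $\{i<0,\, j<m\}$ kills precisely those $U$--powers whose representing cycle lies entirely in the region $\{i<0,\, j<m\}$; a class survives nontrivially in the quotient exactly when its representative meets the complementary region $\{i\ge 0\}\cup\{j\ge m\}$, equivalently (after a sign normalization) when the shifted staircase $\St(K)\otimes U^r$ still intersects $\{i\le 0,\, j\le m\}$. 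This is exactly the geometric content of the definition of $J_K(m)$: it is defined as the greatest $r$ for which $\St(K)\otimes U^r$ meets $\{i\le 0,\, j\le m\}$. Unwinding the grading convention, a generator $x\otimes U^r$ has grading $gr(x) - 2r$, and the top {\bf A} vertex has grading $0$; choosing the extremal $r = J_K(m)$ gives the least surviving grading as $-2J_K(m)$, which is $d_m$.

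\medskip

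The key steps, in order, are: (i) invoke Theorem~\ref{dinvthm} to isolate $d_m$; (ii) identify the generator of the $U$--tower in homology with the type {\bf A} cycle of $\St(K)$; (iii) translate the quotient-complex survival condition into the intersection condition $\St(K)\otimes U^r \cap \{i\le 0,\, j\le m\}\ne \varnothing$, matching it against Definition of $J_K$; and (iv) apply the grading formula $gr(x\otimes U^r) = gr(x) - 2r$ together with the normalization that the top-left vertex has grading $0$ to read off $d_m = -2 J_K(m)$. Substituting into Theorem~\ref{dinvthm} yields the claimed formula.

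\medskip

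The main obstacle I expect is step (iii): verifying carefully that the algorithmic definition of $d_m$ — the least grading of a class $[z]$ in the quotient satisfying the divisibility condition $[z] = U^i[z_i]$ for all $i>0$ — coincides exactly with the extremal intersection condition defining $J_K(m)$, rather than with some nearby but inequivalent extremum. Concretely, one must confirm that the generator realizing $J_K(m)$ is genuinely a $U$--tower class (not a torsion class in the quotient) and that the ``max over the minimum'' structure of $J_K(m)$ picks out the correct {\bf A} vertex; the remark in the text that the minimum in $J_K$ may be taken over type {\bf A} vertices is precisely what makes this matching clean. Once that correspondence is nailed down, the grading computation and the final substitution are routine.
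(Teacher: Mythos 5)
Your overall strategy is the same as the paper's: reduce, via Theorem~\ref{dinvthm}, to the identity $d_m=-2J_K(m)$, and then compute $d_m$ by tracking the staircase tower classes $S_r=\St(K)\otimes U^r$ in the quotient complex. However, your step (iii) — exactly the point you flagged as the main obstacle — contains a genuine error, and your step (ii) feeds into it. First, the proposed generator is wrong: over $\F$ the sum of all type {\bf A} vertices generates $H_*(\St(K))$ only when the number of {\bf A} vertices is odd. For the trefoil $T(2,3)$ the staircase has two {\bf A} vertices $a_1,a_2$ and one {\bf B} vertex $b$ with $\partial b=a_1+a_2$, so your chosen cycle is null-homologous. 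The correct statement is that any \emph{single} type {\bf A} vertex represents the generator, because adjacent {\bf A} vertices are homologous via the {\bf B} vertex between them.

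Second, and more seriously, the survival criterion ``a class survives in the quotient iff its chosen representative is not entirely contained in $\{i<0,\,j<m\}$'' is false, and it produces the wrong extremum. A class dies in the quotient iff \emph{some} representative lies in the killed subcomplex. Since each individual type {\bf A} vertex of $S_r$ represents the tower class, that class dies as soon as \emph{any one} {\bf A} vertex lands in $\{i<0,\,j<m\}$, i.e.\ as soon as $r>\min_{v\in \Vrt_A}\max(v_1,v_2-m)=J_K(m)$. Your criterion (death only when \emph{all} {\bf A} vertices are killed) would give survival for all $r\le\max_{v\in\Vrt_A}\max(v_1,v_2-m)$, hence $d_m=-2\max_{v\in\Vrt_A}\max(v_1,v_2-m)$, which differs from $-2J_K(m)$ whenever the staircase has more than one {\bf A} vertex: a max where a min is required. (Relatedly, your parenthetical ``equivalently, after a sign normalization'' is not an equivalence: ``the representative meets $\{i\ge 0\}\cup\{j\ge m\}$'' and ``the shifted staircase meets $\{i\le 0,\,j\le m\}$'' are conditions with opposite monotonicity in $r$.) The missing idea — which is how the paper's proof works — is precisely that all type {\bf A} vertices of $S_r$ are mutually homologous, so the class dies the moment one of them is killed; conversely, when $r\le J_K(m)$ no vertex of $S_r$ lies in the killed region (which is spanned by a subset of the filtered basis), so the class genuinely survives, and the lowest-graded surviving tower class sits at $r=J_K(m)$ with grading $-2J_K(m)$. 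With that replacement your reduction via Theorem~\ref{dinvthm} and the final grading computation go through as you describe.
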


\begin{proof}  Denote by $S_i$ the subcomplex $\St(K) \otimes U^i$ in $\CFK^\infty(K)$.  The result depends on understanding the homology of the image of $S_i$ in 
 $\CFK^\infty(K) /  \CFK^\infty(K)\{i<0, j<m\}$. Because of the added constraint (see the paragraph before Theorem~\ref{dinvthm}),
we only have to look at the homology classes supported on images of the type {\bf A} vertices.
Notice that if $i > J_K(m)$, then at least one of the type {\bf A} vertices is in  
 $\CFK^\infty(K)\{i<0, j<m\}$.  But all the type {\bf A}  vertices are homologous in $S_i$, and since these generate $H_0(S_i)$, the homology of the image 
in the quotient is 0.  On the other hand, if $i\le J_K(m)$, then none of the vertices of $S_i$ are in  $\CFK^\infty(K)\{i<0, j<m\}$ and thus the homology of $S_i$  survives in the quotient.  
 
 It follows that the least grading of a nontrivial class in the quotient arises from the $U^{J_K(m)}$ translate of one of type {\bf A} vertices of $S_0=\St(K)$.  
Since $U$ lowers grading by 2, the grading is  $-2J_K(m)$.  The result follows by applying the shift described in Theorem~\ref{dinvthm}.
\end{proof}

\begin{remark}\label{rem:same}
Notice that in the case that $q$ is even, the integer values $m = -q/2$ and $m=q/2$ are both in the given interval.  One easily checks that 
Proposition~\ref{prop:dofalg} yields the same value at these two endpoints.
\end{remark}

We now relate the $J$ function to the semigroup of the singular point. Let $I_K$ be the gap function as in Definition~\ref{def:iofg} and Remark~\ref{rem:gapofk}.

\begin{proposition}\label{jequalithm} If $K$ is the link of an algebraic singular point, then for $-g \le m \le g$  $J_K(m) = I_{K}(m + g)$.
\end{proposition}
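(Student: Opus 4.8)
The plan is to exploit that both $m\mapsto J_K(m)$ and $m\mapsto I_K(m+g)$ are integer-valued, non-increasing step functions on $-g\le m\le g$, and to show they coincide by matching their value at a single point together with all of their forward differences. A convenient anchor is $m=g$: the remark preceding Lemma~\ref{growththm} gives $J_K(g)=0$, while $I_K(2g)=\#\{k\in G_K\colon k\ge 2g\}=0$ because the largest gap of the semigroup is $\mu-1=2g-1$ (Lemma~\ref{lem:gaplem}). It therefore suffices to prove that the two functions have the same increment as $m$ runs over $-g\le m\le g-1$.

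First I would record the increment of the gap function. Directly from Definition~\ref{def:iofg}, for $\ell\ge 0$ we have $I_K(\ell)-I_K(\ell+1)=\#\{k\in G_K\cup\Z_{<0}\colon k=\ell\}=\mathbf{1}[\ell\in G_K]$, since no negative integer equals $\ell\ge 0$. As $m+g\ge 0$ throughout our range, this gives $I_K(m+g+1)-I_K(m+g)=-\mathbf{1}[m+g\in G_K]$. On the other hand, Lemma~\ref{growththm} says $J_K(m+1)-J_K(m)=-1$ precisely when $n_{2i-1}-g\le m<n_{2i}-g$ for some $i$, that is when $m+g\in\{n_{2i-1},n_{2i-1}+1,\dots,n_{2i}-1\}$, and $=0$ otherwise. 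Comparing the two increments, the proposition reduces to the single combinatorial identity
\[
G_K=\bigcup_i\{n_{2i-1},n_{2i-1}+1,\dots,n_{2i}-1\},
\]
the blocks on the right being disjoint because $n_0<n_1<\dots<n_{2s}$.

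This identity is exactly the content of the passage from \eqref{eq:alex1} to \eqref{eq:alex2}. Writing $\Delta_K(t)-1=\sum_{i=1}^{2s}(-1)^it^{n_i}$ and grouping the terms in consecutive pairs, each pair expands as $t^{n_{2i}}-t^{n_{2i-1}}=(t-1)(t^{n_{2i}-1}+\dots+t^{n_{2i-1}})$, so that
\[
\sum_{g\in G_K}t^{g}=\frac{\Delta_K(t)-1}{t-1}=\sum_i\bigl(t^{n_{2i-1}}+t^{n_{2i-1}+1}+\dots+t^{n_{2i}-1}\bigr).
\]
Since the gaps are distinct, comparing exponents yields the displayed description of $G_K$. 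Feeding this back into the increment computation gives $J_K(m+1)-J_K(m)=I_K(m+g+1)-I_K(m+g)$ for every $-g\le m\le g-1$, and telescoping down from the anchor $J_K(g)=I_K(2g)=0$ produces $J_K(m)=I_K(m+g)$ on all of $[-g,g]$.

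The identity for $G_K$ is the only genuine content, and it is essentially already present in the paper, so the remaining work is bookkeeping. The main points requiring care are the index matching in Lemma~\ref{growththm}---confirming that $n_{2i-1}-g\le m<n_{2i}-g$ translates exactly into membership of $m+g$ in one of the half-open blocks $[n_{2i-1},n_{2i})$---and checking that the negative integers appearing in Definition~\ref{def:iofg} are irrelevant once $m+g\ge 0$, so that $I_K$ truly counts only gaps on the range in question.
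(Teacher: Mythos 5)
Your proof is correct and takes essentially the same route as the paper's: the paper's own (very terse) argument likewise matches the growth of $J_K(m)$ from Lemma~\ref{growththm} against the growth of $I_K(m+g)$, using exactly the identification of the gap sequence with $\bigcup_i\{n_{2i-1},\dots,n_{2i}-1\}$ coming from the expansion \eqref{eq:alex1}--\eqref{eq:alex2}, and anchors the comparison at $J_K(g)=I_K(2g)=0$ because the largest gap is $2g-1$. Your write-up simply makes explicit (increment formulas, disjointness of the blocks, telescoping) what the paper compresses into the assertion that the growth properties are ``identical.''
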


\begin{proof} In Section~\ref{subsecsemi} we described the gap sequence in terms of the exponents $n_i$.  
It follows immediately that the growth properties of $I_K(m+g)$ are identical to those of $J_K(m)$, as described in Lemma~\ref{growththm}.  Furthermore, since the largest element in the gap sequence is $2g-1$, we have $I_K(2g)=J_K(g)=0$.
\end{proof}

\subsection{Proof of Theorem~\ref{thm:first}}\label{S:proof}

According to Lemma~\ref{extendinglemma}, the \Spc  structures on $S^3_{d^2}(K)$ that extend to the complement $W$ of a neighborhood of $C$ are precisely those $\spinc_m$ where $m = kd$ for some $k$, where $-d/2 \le k \le d/2$; here $k\in\Z$ if $d$ odd, and $k\in\Z+\frac12$ if $d$ is even.  
Since $W$ is a rational homology sphere, by \cite[Proposition 9.9]{OzSz03}
the associated $d$--invariants are 0, so by Proposition~\ref{prop:dofalg}, 
letting $q = d^2$ and $m= kd$, we have $$2J_K(kd) = \frac{(-2kd +d^2)^2  - d^2}{4d^2}. $$  
By Proposition~\ref{jequalithm} we can replace this with 
\[8I_{G_K}(kd + g) = (d - 2k -1)(d - 2k+1).\]
Now $g=d(\frac{d-3}{2})+1$, so by substituting $j=k+\frac{d-3}{2}$ we obtain
\[8I_{K}(jd+1)=4(d-j+1)(d-j+2)\]
and $j\in[-3/2,\ldots,d-3/2]$ is an integer regardless of the parity of $d$. The proof is accomplished by recalling that $k_{jd}=I_K(jd+1)$, see Remark~\ref{rem:kj}.

\section{Constraints on general rational cuspidal curves}

\subsection{Products of staircase complexes and the $d$--invariants} 
 In the case that there is more than one cusp, the previous approach continues to apply, except the knot $K$ is now a connected sum of algebraic knots. 

For the connected sum $K = \#K_i$,  the complex $\CFK^\infty(K)$ is the tensor product of the $\CFK^\infty(K_i)$.  To analyze this, we consider the tensor product 
of the staircase complexes $\St(K_i)$.    Although this is not a staircase complex, the homology is still $\F$, supported at grading level 0. For the tensor product
we shall denote by $\Vrt(\St(K_1)\otimes\ldots\otimes\St(K_n))$ the set of vertices of the corresponding complex. These are of the form $v_1+\ldots+v_n$,
where $v_j\in\Vrt(K_j)$, $j=1,\ldots,n$.

 Any element of the form $a_{1q_1}\otimes a_{2q_2}\otimes \cdots \otimes a_{nq_n}$ represents a generator of the homology of the tensor product, 
where the $a_{iq_i}$ are vertices of type {\bf A} taken from each $\St(K_i)$.  Furthermore, if the translated subcomplex  $\text{St}(K) \otimes U^i \subset \text{St}(K) \otimes \F[U, U^{-1}]$ intersects $\CFK^\infty(K)\{i<0, j<m\}$ nontrivially, then the intersection contains one of these generators.  Thus, the previous argument applies to prove the following.

\begin{proposition}\label{prop:d-surg}
Let $q>2g-1$, where $g=g(K)$ and $m\in[-q/2,q/2]$. Then we have
\[d(S^3_q(K),\mathfrak{s}_m)=-2J_K(m)+\frac{(-2m+q)^2-q}{4q},\]
where $J_K(m)$ is the minimum of $\max(\alpha,\beta-m)$ over all elements of form $a_{1q_1}\otimes a_{2q_2}\otimes\ldots\otimes a_{nq_n}$,
where $(\alpha,\beta)$, is the filtration level of the corresponding element.
\end{proposition}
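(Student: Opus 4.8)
The plan is to run the argument of Proposition~\ref{prop:dofalg} essentially verbatim, replacing the single staircase by the tensor product $S=\St(K_1)\otimes\cdots\otimes\St(K_n)$ and letting the products $a=a_{1q_1}\otimes\cdots\otimes a_{nq_n}$ of type~\textbf{A} vertices play the role that the type~\textbf{A} vertices played there. The starting observation is that, since the differential on $\CFK^\infty(K)=S\otimes\F[U,U^{-1}]$ satisfies $\partial(x\otimes U^i)=(\partial x)\otimes U^i$, the complex splits as the direct sum of its translates $S_i=S\otimes U^i$, and its homology is the tower $\F[U,U^{-1}]$ whose generator in grading $-2i$ is represented by $a\otimes U^i$ for any such product $a$. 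As prescribed before Theorem~\ref{dinvthm}, I must locate the least grading $d_m$ in which this tower survives in the quotient $\CFK^\infty(K)/\CFK^\infty(K)\{i<0,j<m\}$; the formula then follows by adding the shift $\frac{(-2m+q)^2-q}{4q}$ of Theorem~\ref{dinvthm}.

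First I would record the two combinatorial facts that make the products $a$ behave like type~\textbf{A} vertices. By the K\"unneth theorem $H_*(S)=\F$, concentrated in grading $0$, with generator represented by any product $a$; all such products are mutually homologous in $S$, because each $a_{iq_i}$ generates $H_0(\St(K_i))=\F$. Moreover, since in every $\St(K_i)$ the type~\textbf{A} vertices are exactly the generators of lowest grading, the grading~$0$ part of $S$ consists \emph{precisely} of the products $a$, and these are the only grading~$0$ cycles.

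With $J_K(m)=\min_a\max(\alpha,\beta-m)$ over these products, where $(\alpha,\beta)$ is the filtration level of $a$, I would split into two cases. The element $a\otimes U^i$ lies in $\CFK^\infty(K)\{i<0,j<m\}$ exactly when $i>\max(\alpha,\beta-m)$. If $i>J_K(m)$, the minimizing product $a^\ast$ has $a^\ast\otimes U^i\in\CFK^\infty(K)\{i<0,j<m\}$, so it vanishes in the quotient; since $a^\ast$ is homologous to every other product, a relation $a-a^\ast=\partial b$ descends to exhibit $a\otimes U^i$ as a boundary, and the tower class in grading $-2i$ dies. If instead $i\le J_K(m)$, then no product $a\otimes U^i$ lies in $\CFK^\infty(K)\{i<0,j<m\}$; were the tower class a boundary in the quotient, the generator of $H_0(S_i)$ would be represented by a grading~$0$ cycle supported in the subcomplex, hence (by the grading fact) by a nonzero combination of products $a\otimes U^i$ lying in $\CFK^\infty(K)\{i<0,j<m\}$, a contradiction. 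Thus the class survives, the largest surviving $i$ equals $J_K(m)$, and $d_m=-2J_K(m)$, giving $d(S^3_q(K),\mathfrak{s}_m)=-2J_K(m)+\frac{(-2m+q)^2-q}{4q}$.

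The main obstacle is precisely that $S$ is no longer a staircase, so the clean statement invoked in Proposition~\ref{prop:dofalg} — that for $i\le J_K(m)$ \emph{none} of the generators of $S_i$ meet $\{i<0,j<m\}$ — can fail for the mixed products containing type~\textbf{B} factors. I expect the two facts above (that the products $a$ are simultaneously the grading~$0$ generators and a mutually homologous family generating $H_0$) to be the crux: they let the single minimizing product $a^\ast$ force the death of the class, and a bare grading count force its survival, so that the mixed generators never need to be tracked.
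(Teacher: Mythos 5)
Your proof is correct and shares its skeleton with the paper's: the type-\textbf{A} products represent the tower, and for $i>J_K(m)$ the minimizing product, lying in the subcomplex and being homologous to all the others, kills the class. The difference is in the survival direction, where you substitute a genuinely different key lemma. The paper's proof rests on the combinatorial assertion that if $S_i=\St(K_1)\otimes\cdots\otimes\St(K_n)\otimes U^i$ meets $\CFK^\infty(K)\{i<0,j<m\}$ at all, then it meets it in a type-\textbf{A} product: one replaces each type-\textbf{B} factor of an offending mixed generator by the type-\textbf{A} vertex directly below it in its staircase, which preserves the first filtration coordinate and decreases the second, hence stays in the region. Consequently, for $i\le J_K(m)$ the intersection $S_i\cap\CFK^\infty(K)\{i<0,j<m\}$ is actually zero and survival is automatic, exactly as in Proposition~\ref{prop:dofalg}. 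In particular, your concluding worry --- that the clean statement ``none of the generators of $S_i$ meet the region'' can fail for mixed products --- is unfounded: it never fails, though seeing this requires precisely the push-down observation just described. Your replacement for it is the grading fact: the grading $-2i$ part of $S_i$ is spanned exactly by the elements $a\otimes U^i$ with $a$ a product of type-\textbf{A} vertices, so if the tower class died in the quotient it would be represented by a nonzero cycle in the subcomplex of that grading, i.e.\ by a combination of type-\textbf{A} products in the forbidden region, a contradiction. This buys some robustness --- it uses only that the bottom-graded part of the tensor product consists of cycles generating its homology, not the staircase adjacency structure --- at the price of extra homological bookkeeping (splitting the quotient as $\bigoplus_i S_i/\bigl(S_i\cap\CFK^\infty(K)\{i<0,j<m\}\bigr)$ and extracting the homogeneous part of a bounding chain), whereas the paper's route is shorter once its combinatorial claim is granted.
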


 Since the $d$--invariants vanish for all \Spc  structures that extend to $W$, we have:
 
 \begin{theorem}\label{Jofcomposite}  If $C$ is a rational cuspidal curve of degree $d$ with singular points $K_i$ and $K = \#K_i$, then for all $k$ in the 
range $[-d/2,d/2]$, with $k\in\Z$ for $d$ odd and $k\in\Z+\frac12$ for $d$ even:
 \[  J_K(kd) = \frac{ (d-2k -1)(d-2k+1)}{8}.\]
 
 \end{theorem}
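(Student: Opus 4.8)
The plan is to deduce this directly from Proposition~\ref{prop:d-surg} together with the vanishing of the correction terms on the \Spc structures that extend over $W$, exactly paralleling the one-cusp computation of Section~\ref{S:proof}. Recall from the paragraph preceding the statement that $M = S^3_{d^2}(K)$ bounds the rational homology ball $W = \overline{\CP^2 - Y}$, so by \cite[Proposition 9.9]{OzSz03} every $\spinc_m$ that extends over $W$ satisfies $d(M, \spinc_m) = 0$; by Lemma~\ref{extendinglemma} these are precisely the structures with $m = kd$, where $k \in \Z$ and $-d/2 \le k \le d/2$ for $d$ odd, and $k \in \Z + \tfrac12$ in the same range for $d$ even.

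Before applying Proposition~\ref{prop:d-surg} with $q = d^2$, I would verify its hypothesis $q > 2g - 1$, where $g = g(K) = \sum_i g(K_i)$ by additivity of the genus under connected sum. This is the one place the rational cuspidal assumption is used: by the genus formula \eqref{eq:gofC}, $C$ is rational cuspidal if and only if $\sum_i g(K_i) = \tfrac12(d-1)(d-2)$, so $g = \tfrac12(d-1)(d-2)$ and hence $2g - 1 = d^2 - 3d + 1 < d^2 = q$ for all $d \ge 1$. Moreover the constraint $m = kd \in [-q/2, q/2]$ is equivalent to $k \in [-d/2, d/2]$, reproducing the stated range.

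With the hypotheses in place the computation is immediate. Substituting $q = d^2$, $m = kd$, and $d(M, \spinc_{kd}) = 0$ into Proposition~\ref{prop:d-surg} gives
\[
0 = -2J_K(kd) + \frac{(-2kd + d^2)^2 - d^2}{4d^2},
\]
so that
\[
2J_K(kd) = \frac{d^2(d-2k)^2 - d^2}{4d^2} = \frac{(d-2k)^2 - 1}{4} = \frac{(d-2k-1)(d-2k+1)}{4},
\]
and dividing by $2$ yields the claimed formula. The only structural difference from Section~\ref{S:proof} is that $J_K(m)$ is now the minimum of $\max(\alpha, \beta - m)$ over the type \textbf{A} generators $a_{1q_1} \otimes \cdots \otimes a_{nq_n}$ of the tensor product of the staircases, rather than over the vertices of a single staircase.

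I expect no serious obstacle: the genuinely new analytic input---that the single-staircase distance computation of Proposition~\ref{prop:dofalg} survives passage to the tensor product complex $\CFK^\infty(\#K_i)$---has already been isolated in Proposition~\ref{prop:d-surg}. What remains is bookkeeping, and the only steps demanding care are confirming the genus bound (which is exactly the rational cuspidal condition) and tracking the parity of $k$ so that Lemma~\ref{extendinglemma} is applied correctly for both parities of $d$.
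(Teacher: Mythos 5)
Your proposal is correct and follows exactly the paper's own route: vanishing of $d(S^3_{d^2}(K),\spinc_{kd})$ for the \Spc{} structures that extend over the rational homology ball $W$ (Lemma~\ref{extendinglemma} plus \cite[Proposition 9.9]{OzSz03}), fed into Proposition~\ref{prop:d-surg} with $q=d^2$, $m=kd$, followed by the same algebraic simplification. Your explicit verification of the hypothesis $q>2g-1$ via the rational cuspidal condition $g=\tfrac12(d-1)(d-2)$ is a detail the paper leaves implicit, but it is a welcome addition rather than a deviation.
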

  \begin{proof} We have from the vanishing of the    $d$--invariants, $d( S^3_{d^2}(K), \spinc_m)$  (for $m = kd$) the condition 
  $$  J_K(m) = \frac{ (-2m + d^2)^2 - d^2}{8d^2}.$$
 The result then follows by substituting   $m = kd$ and performing algebraic simplifications.
\end{proof}

\subsection{Restatement in terms of $I_{K_i}(m)$.}  We now wish to restate Theorem~\ref{Jofcomposite} in terms of the coefficients of the Alexander polynomial, properly expanded.  As before, for the gap sequence for the knot $K_i$,  denoted $G_{K_i}$, let
\[
I_i(s)=\#\{k\ge s\colon k\in G_{K_i}\cup\Z_{<0}\}.
\]
For two functions $I,I'\colon\Z\to\Z$ bounded below we define the following operation
\begin{equation}\label{eq:convolution}
I\diamond I'(s)=\min_{m\in\Z} I(m)+I'(s-m).
\end{equation}
As pointed out to us by  Krzysztof Oleszkiewicz, in real analysis this operation is sometimes called the \emph{infimum convolution}.

The following is   the main result of this article.
\begin{theorem}\label{thm:main}
Let $C$ be a rational cuspidal curve of degree $d$.  Let $I_1,\dots,I_n$ be the gap functions associated to each singular point on $C$.   
Then for any $j\in\{-1,0,\ldots,d-2\}$ we have
\[I_1\diamond I_2\diamond \ldots \diamond I_n(jd+1)=\frac{(j-d+1)(j-d+2)}{2}.\]
\end{theorem}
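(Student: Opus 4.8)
The plan is to reduce Theorem~\ref{thm:main} to Theorem~\ref{Jofcomposite} by showing that the distance function $J_K$ of the connected sum $K=\#K_i$ is the infimum convolution of the distance functions of its summands, and then translating each $J_{K_i}$ into the gap function $I_i$ by means of Proposition~\ref{jequalithm}. The central claim I would establish is the identity
\[
J_K \;=\; J_{K_1}\diamond J_{K_2}\diamond\cdots\diamond J_{K_n}
\]
of functions on $\Z$, with $\diamond$ the infimum convolution of~\eqref{eq:convolution}. Granting this, the remainder is bookkeeping: writing $g_i=g(K_i)$ and $g=\sum_i g_i$, Proposition~\ref{jequalithm}, in the global form established below, gives $J_{K_i}(m)=I_i(m+g_i)$, and substituting $s_i=m_i+g_i$ in~\eqref{eq:convolution} turns the convolution of the $J_{K_i}$ into that of the $I_i$, yielding $J_K(m)=(I_1\diamond\cdots\diamond I_n)(m+g)$.

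The heart of the argument is the convolution identity, which I would prove from the description of $J_K$ in Proposition~\ref{prop:d-surg}. Because the filtration level of a tensor product $a_{1q_1}\otimes\cdots\otimes a_{nq_n}$ is the coordinatewise sum of the filtration levels of its factors, that proposition may be written as
\[
J_K(m)=\min\ \max\Bigl(\textstyle\sum_i v_{i1},\ \sum_i v_{i2}-m\Bigr),
\]
the minimum running over all $n$--tuples of type {\bf A} vertices $v_i=(v_{i1},v_{i2})\in\Vrt_A(\St(K_i))$. For the inequality ``$\le$'', given a decomposition $m=\sum_i m_i$ I would pick, for each $i$, a vertex $w_i$ with $J_{K_i}(m_i)=\max(w_{i1},w_{i2}-m_i)$; the two estimates $\sum_i w_{i1}\le\sum_i J_{K_i}(m_i)$ and $\sum_i(w_{i2}-m_i)\le\sum_i J_{K_i}(m_i)$ then give $J_K(m)\le\sum_i J_{K_i}(m_i)$, and minimizing over decompositions yields one direction. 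For ``$\ge$'' I would take a minimizing tuple $v^\ast$, set $r=J_K(m)$, and split according to whether $r=\sum_i v^\ast_{i1}$ or $r=\sum_i v^\ast_{i2}-m$. In the first case $m\ge\sum_i(v^\ast_{i2}-v^\ast_{i1})$, so integers $m_i\ge v^\ast_{i2}-v^\ast_{i1}$ with $\sum_i m_i=m$ may be chosen; then $\max(v^\ast_{i1},v^\ast_{i2}-m_i)=v^\ast_{i1}$ and hence $\sum_i J_{K_i}(m_i)\le\sum_i v^\ast_{i1}=r$. The second case is symmetric, choosing $m_i\le v^\ast_{i2}-v^\ast_{i1}$. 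This exhibits a decomposition meeting the bound, completing the identity.

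I expect the reverse inequality, and in particular the verification that the integer decompositions above always exist and stay where Proposition~\ref{jequalithm} applies, to be the main obstacle. To sidestep the range restriction I would first record that $J_{K_i}(m)=I_i(m+g_i)$ holds for every $m\in\Z$: for $m\ge g_i$ both sides vanish, and for $m\le-g_i$ the minimizing vertex is the one on the horizontal axis, giving $J_{K_i}(m)=-m=I_i(m+g_i)$. With the global identity $J_K(m)=(I_1\diamond\cdots\diamond I_n)(m+g)$ secured, I would conclude by feeding in Theorem~\ref{Jofcomposite}: for $m=kd$ it reads $(I_1\diamond\cdots\diamond I_n)(kd+g)=\tfrac{(d-2k-1)(d-2k+1)}{8}$. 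Since $C$ is rational cuspidal, \eqref{eq:gofC} forces $g=\tfrac12(d-1)(d-2)=d\cdot\tfrac{d-3}{2}+1$, so the substitution $j=k+\tfrac{d-3}{2}$ sends $kd+g$ to $jd+1$ and transforms the right-hand side into $\tfrac{(j-d+1)(j-d+2)}{2}$; finally $k\in[-d/2,d/2]$ (integral for $d$ odd, half-integral for $d$ even) corresponds precisely to the integers $j\in\{-1,0,\ldots,d-2\}$, which is the asserted range.
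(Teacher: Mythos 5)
Your proposal is correct, and it follows the same overall route as the paper: the identity $J_K(m)=(I_1\diamond\cdots\diamond I_n)(m+g)$ that you derive is precisely the paper's Proposition~\ref{prop:ikjk}, and your endgame --- feeding this into Theorem~\ref{Jofcomposite} and substituting $j=k+\frac{d-3}{2}$ --- is exactly how the paper deduces Theorem~\ref{thm:main} from Proposition~\ref{prop:ikjk} and the computation of Section~\ref{S:proof}. The one genuine difference lies in how the combinatorial core is proved. The paper argues by induction on the number of singular points, splitting off one staircase factor at a time and invoking the exact min--max identity of Lemma~\ref{lem:magick}, $\max(x+y,z+w)=\min_{k\in\Z}\bigl(\max(x,z-k)+\max(y,w+k)\bigr)$, applied to the vertex coordinates. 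You instead prove the $n$-fold convolution identity $J_K=J_{K_1}\diamond\cdots\diamond J_{K_n}$ in one stroke via a two-sided inequality: ``$\le$'' from the termwise vertex estimates, and ``$\ge$'' by a case analysis on which argument of the outer maximum realizes $J_K(m)$, producing an explicit integer decomposition $m=\sum_i m_i$; this amounts to a self-contained proof of the $n$-variable generalization of Lemma~\ref{lem:magick}, so the two arguments buy the same thing by slightly different bookkeeping. A point in your favor: you explicitly verify that $J_{K_i}(m)=I_i(m+g_i)$ holds for \emph{all} $m\in\Z$ (checking the regimes $m\ge g_i$ and $m\le -g_i$ directly), whereas the paper states Proposition~\ref{jequalithm} only for $-g\le m\le g$ yet, in the proof of Proposition~\ref{prop:ikjk}, takes a minimum over unrestricted $k\in\Z$ where the identity is implicitly needed outside that range; your extension closes that small (and easily fixable) gap.
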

\begin{remark} $\ $

\begin{itemize}
\item For $j=-1$, the left hand side is $d(d-1)/2=d-1+(d-1)(d-2)/2$.  The meaning of the equality is that $\sum\# G_j=(d-1)(d-2)/2$ which
follows from \eqref{eq:gofC} and Lemma~\ref{lem:gaplem}. Thus, the case $j=-1$
does not provide any new constraints. Likewise, for $j=d-2$ both sides are equal to $0$.
\item We refer to Section~\ref{sec:reform} for a reformulation of Theorem~\ref{thm:main}.
\item We do not know
if Theorem~\ref{thm:main} settles Conjecture~\ref{conj:one} for $n>1$.  The passage between the two formulations appears to be more complicated; see~\cite[Proposition 7.1.3]{NeRo12}  and the example in Section~\ref{sec:certain}.
\end{itemize}
\end{remark}

Theorem~\ref{thm:main} is an immediate  consequence of the arguments in Section~\ref{S:proof} together with the following proposition.

\begin{proposition}\label{prop:ikjk}
As in \eqref{eq:convolution}, let $I_K$ be given by $I_1\diamond\ldots\diamond I_n$, for the 
gap functions $I_1,\ldots,I_n$. Then $J_K(m)=I_K(m+g)$.
\end{proposition}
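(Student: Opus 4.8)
The plan is to prove the identity in two stages: first show that the composite distance function $J_K$ is itself the infimum convolution of the single-knot distance functions $J_{K_1},\dots,J_{K_n}$, and then invoke Proposition~\ref{jequalithm} together with a shift of variables to convert this into the infimum convolution of the gap functions $I_1,\dots,I_n$. Throughout I use that the genus is additive under connected sum, $g=\sum_j g_j$, and that the operation $\diamond$ of \eqref{eq:convolution} is associative, so that the $n$--fold convolution is $I_1\diamond\dots\diamond I_n(s)=\min_{s_1+\dots+s_n=s}\sum_j I_j(s_j)$.

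For the first stage, recall from Proposition~\ref{prop:d-surg} that
\[
J_K(m)=\min_{a_1,\dots,a_n}\max\Bigl(\textstyle\sum_j\alpha_j,\ \sum_j\beta_j-m\Bigr),
\]
where $a_j$ ranges over the type {\bf A} vertices of $\St(K_j)$ and $(\alpha_j,\beta_j)$ are the coordinates of $a_j$. The crux is the elementary identity, valid for any fixed choice of vertices,
\[
\max\Bigl(\textstyle\sum_j\alpha_j,\ \sum_j\beta_j-m\Bigr)=\min_{m_1+\dots+m_n=m}\ \sum_j\max(\alpha_j,\beta_j-m_j).
\]
Here the bound ``$\le$'' is immediate, since every allocation satisfies $\sum_j\max(\alpha_j,\beta_j-m_j)\ge\sum_j\alpha_j$ and $\ge\sum_j(\beta_j-m_j)=\sum_j\beta_j-m$. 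The reverse bound is achieved by an explicit integer splitting: if $\sum_j\beta_j-m\le\sum_j\alpha_j$, distribute $m$ so that each $m_j\ge\beta_j-\alpha_j$, forcing every summand to its floor $\alpha_j$; otherwise keep each $m_j\le\beta_j-\alpha_j$, so every summand equals $\beta_j-m_j$ and the total is $\sum_j\beta_j-m$. Substituting this identity and interchanging the two (finite) minimizations, which decouple across the factors, yields
\[
J_K(m)=\min_{m_1+\dots+m_n=m}\ \sum_j\min_{a_j}\max(\alpha_j,\beta_j-m_j)=\min_{m_1+\dots+m_n=m}\ \sum_j J_{K_j}(m_j).
\]

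For the second stage I would first upgrade Proposition~\ref{jequalithm} to the statement that $J_{K_j}(m_j)=I_j(m_j+g_j)$ holds for \emph{all} $m_j\in\Z$, not merely on $[-g_j,g_j]$: both sides vanish for $m_j\ge g_j$, and for $m_j\le -g_j$ both equal $-m_j$, since the minimizing vertex becomes the bottom-right corner $(g_j,0)$ of the staircase while $I_j(s)=g_j-s$ for $s\le 0$. Granting this, set $s_j=m_j+g_j$, so that $\sum_j s_j=m+g$, and conclude
\[
J_K(m)=\min_{s_1+\dots+s_n=m+g}\ \sum_j I_j(s_j)=I_1\diamond\dots\diamond I_n(m+g)=I_K(m+g).
\]
The main obstacle is the middle display's max/infimum-convolution identity, together with the legitimacy of swapping the two minimizations; in particular the achievability half must produce a genuine integer splitting saturating the bound in each of the two regimes. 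A secondary point requiring care is the matching of the tail behaviour of $J_{K_j}$ and $I_j$ outside $[-g_j,g_j]$, which is exactly what makes the global shift by $g$ valid rather than merely a shift on the central range.
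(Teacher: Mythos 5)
Your proof is correct, and at its core it is the same argument as the paper's. Your middle-display identity
\[
\max\Bigl(\textstyle\sum_j\alpha_j,\ \sum_j\beta_j-m\Bigr)=\min_{m_1+\dots+m_n=m}\ \sum_j\max(\alpha_j,\beta_j-m_j)
\]
is precisely the $n$-fold version of the paper's key Lemma~\ref{lem:magick}, which is stated for two factors as $\max(x+y,z+w)=\min_{k\in\Z}\bigl(\max(x,z-k)+\max(y,w+k)\bigr)$, and both arguments then reduce to the single-knot case, Proposition~\ref{jequalithm}. The differences are organizational: the paper inducts on $n$, splitting off one staircase factor at a time and applying the two-variable lemma together with the inductive hypothesis, whereas you prove the $n$-variable identity in one pass and only afterwards convert each $J_{K_j}$ into $I_j$. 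One point in your write-up is a genuine improvement in rigor: the paper's induction implicitly uses $J_{K'}(k)=I_{K'}(k+g')$ for \emph{all} $k\in\Z$ (the minimum in Lemma~\ref{lem:magick} runs over all integers), even though Proposition~\ref{jequalithm} is stated only on $[-g,g]$; you explicitly check the tail behaviour --- both sides vanish for $m_j\ge g_j$ and both equal $-m_j$ for $m_j\le-g_j$, the latter because the minimizing vertex is the bottom-right corner $(g_j,0)$ while $I_j(s)=g_j-s$ for $s\le 0$ --- which is exactly what legitimizes minimizing over unrestricted integer splittings. So your argument is the paper's, made non-inductive and with the out-of-range extension spelled out rather than left implicit.
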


\begin{proof}
The proof follows by  induction over $n$. For $n=1$, the statement is equivalent to Proposition~\ref{jequalithm}. Suppose we have proved it for $n-1$.
Let $K'=K_1\#\ldots\# K_{n-1}$ and let $J_{K'}(m)$ be the corresponding $J$ function.
Let us consider a vertex $v\in\Vrt(\St_1(K)\otimes\ldots\otimes\St_n(K))$.  We can write this  as $v'+v_n$, where $v'\in\Vrt(\St(K_1)\otimes\cdots\otimes\St(K_{n-1}))$
and $v_n\in\Vrt(\St(K_n))$. We write the coordinates of the vertices as $(v_1,v_2)$, $(v'_1,v'_2)$ and $(v_{n1},v_{n2})$, respectively. We have
$v_1=v'_1+v_{n1}$, $v_2=v'_2+v_{n2}$. We shall need the following lemma.

\begin{lemma}\label{lem:magick}
For any four integers $x,y,z,w$ we have 
\[\max(x+y,z+w)=\min_{k\in\Z}\left(\max(x,z-k)+\max(y,w+k)\right).\]
\end{lemma}

\begin{proof}[Proof of Lemma~\ref{lem:magick}]
The direction `$\le$' is trivial. The equality is attained  at $k=z-x$.
\end{proof}

\smallskip
\emph{Continuation of the proof of Proposition~\ref{prop:ikjk}.}

Applying Lemma~\ref{lem:magick} to $v_1',v_2',v_{n1},v_{n2}-m$ and taking the minimum over all vertices $v$ we obtain
\begin{multline*}
J_K(m)=\min_{v\in\Vrt(\St(K_1)\otimes\ldots\otimes\St(K_n))}\max(v_1,v_2-m)=\\
\min_{v'\in\Vrt'}\min_{v_n\in\Vrt_n}\min_{k\in\Z}\left(\max(v'_1,v_2'-k)+\max(v_{n1},v_{n2}+k-m)\right),
\end{multline*}
where we denote $\Vrt'=\Vrt(\St(K_1)\otimes\cdots\otimes\St(K_{n-1}))$ and $\Vrt_n=\Vrt(\St(K_n))$.

\noindent The last expression is clearly $\min_{k\in\Z}J_{K'}(k)+J_{K_n}(m-k)$. By the induction assumption this is equal to
\[\min_{k\in\Z} I_{K'}(k+g')+I_{K_n}(m-k+g_n)=I_K(m+g),\]
where $g'=g(K')$ and $g_n=g(K_n)$ are the genera, and we use  the fact that $g=g'+g_n$.
\end{proof}

\section{Examples and applications}\label{sec:four}

\subsection{A certain curve of degree $6$}\label{sec:certain}

As described, for instance, in~\cite[Section 2.3, Table 1]{FLMN04}, there exists an algebraic curve of degree $6$ with two singular points, the
links of which are $K=T(4,5)$ and $K'=T(2,9)$.
The values of $I_K(m)$ for $m\in\{0,\ldots,11\}$ are $\{6,6,5,4,3,3,3,2,1,1,1,1\}$. The values of $I_{K'}(m)$ for $m\in\{0,\ldots,7\}$
are $\{4,4,3,3,2,2,1,1\}$. We readily get
\[I\diamond I'(1)=10,\ I\diamond I'(7)=6,\ I\diamond I'(13)=3,\ I\diamond I'(19)=1,\]
exactly as predicted by Theorem~\ref{thm:main}.

On the other hand, the computations in \cite{FLMN04} confirm Conjecture~\ref{conj:one} but we   sometimes have an inequality. For
example $k_6=5$, whereas Conjecture~\ref{conj:one} states $k_6\le 6$. This shows that Theorem~\ref{thm:main} is indeed more precise.

\subsection{Reformulations of Theorem~\ref{thm:main}}\label{sec:reform}
Theorem~\ref{thm:main} was formulated in a way that fits best with its theoretical underpinnings. In some applications, it is advantageous to reformulate the
result in terms of the function counting semigroup elements in the interval $[0,k]$. To this end, we introduce some notation.

Recall that for a semigroup $S\subset \Z_{\ge 0}$, the gap sequence of $G$ is $\Z_{\ge 0}\sm S$. We put $g=\#G$ and
for $m\ge 0$ we define
\begin{equation}\label{eq:rm-def}
R(m)=\#\{j\in S\colon j\in[0,m)\}.
\end{equation}
\begin{lemma}
For $m\ge 0$, $R(m)$ is related to the gap function $I(m)$ (see \eqref{eq:gapfunction})
by the following relation:
\begin{equation}\label{eq:rm}
R(m)=m-g+I(m).
\end{equation}
\end{lemma}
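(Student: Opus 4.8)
The plan is to count the integers lying in the interval $[0,m)$ in two different ways. Since $m \geq 0$, this interval contains exactly the $m$ integers $0,1,\ldots,m-1$, and each of them is either an element of the semigroup $S$ or a gap in $G = \Z_{\ge 0}\sm S$. First I would record the resulting partition identity
\[
m = \#\{j\in S \colon j\in[0,m)\} + \#\{j\in G \colon j\in[0,m)\} = R(m) + \#\{j\in G \colon j<m\},
\]
where the last equality uses that every element of $G$ is nonnegative, so the constraint $j \ge 0$ is automatic.

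Next I would rewrite the gap count $\#\{j\in G \colon j<m\}$ in terms of $g$ and $I(m)$. Splitting $G$ according to whether its elements lie below $m$ or are at least $m$ gives $g = \#G = \#\{j\in G \colon j<m\} + \#\{j\in G \colon j\ge m\}$. The key observation is that for $m \ge 0$ the negative integers in the definition \eqref{eq:gapfunction} contribute nothing, so that $I(m) = \#\{k\in G\cup\Z_{<0} \colon k\ge m\} = \#\{j\in G \colon j\ge m\}$. Hence $\#\{j\in G \colon j<m\} = g - I(m)$.

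Substituting this into the partition identity yields $m = R(m) + g - I(m)$, which rearranges immediately to the claimed formula $R(m) = m - g + I(m)$. I do not expect any genuine obstacle here: the only point requiring care is the hypothesis $m \ge 0$, which is precisely what guarantees both that $[0,m)$ contains exactly $m$ integers and that the negative part of $G\cup\Z_{<0}$ drops out of $I(m)$. I would state this explicitly rather than treat it as a difficulty.
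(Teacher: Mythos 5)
Your proposal is correct and is essentially the paper's own proof: the paper introduces the auxiliary count $K(m)=\#\{j\in[0,m)\colon j\in G\}$, notes $K(m)=g-I(m)$ and $R(m)+K(m)=m$, which is exactly your partition of $[0,m)$ into semigroup elements and gaps followed by splitting $G$ at $m$. You merely spell out the small points the paper leaves implicit (that $\Z_{<0}$ contributes nothing to $I(m)$ when $m\ge 0$), which is fine.
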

\begin{proof}
Let us consider an auxiliary function $K(m)=\#\{j\in[0,m): j\in G\}$. Then $K(m)=g-I(m)$. Now $R(m)+K(m)=m$, which completes the proof.
\end{proof}
We extend $R(m)$ by \eqref{eq:rm} for all $m\in\Z$. We remark that $R(m)=m-g$ for $m>\sup G$ and $R(m)=0$ for $m<0$. In particular,
$R$ is a non-negative, non-decreasing function.

We have the following result.
\begin{lemma}\label{lem:RandI}
Let $I_1,\dots,I_n$ be the gap functions corresponding to the semigroups $S_1,\ldots,S_n$. Let $g_1,\dots,g_n$ be given by $g_j=\#{\Z_{\ge 0}\sm S_j}$.
Let $R_1,\ldots,R_n$ be as in \eqref{eq:rm-def}. Then
\[R_1\diamond R_2\diamond \ldots\diamond R_n(m)=m-g+I_1\diamond \ldots \diamond I_n(m),\]
where $g=g_1+\ldots+g_n$.
\end{lemma}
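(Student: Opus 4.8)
The plan is to exploit the affine relation \eqref{eq:rm}, namely $R_j(m)=m-g_j+I_j(m)$, together with the fact that the infimum convolution $\diamond$ is associative, so that the iterated convolution admits a symmetric description. Concretely, I would first record that for functions bounded below the operation \eqref{eq:convolution} satisfies
\[
R_1\diamond R_2\diamond\cdots\diamond R_n(m)=\min_{a_1+\cdots+a_n=m}\ \sum_{j=1}^n R_j(a_j),
\]
the minimum being over all integer decompositions $m=a_1+\cdots+a_n$. This follows by unwinding the two-fold definition and reindexing (for the two-fold case, $(R\diamond R')\diamond R''(m)=\min_{k}\min_{a}\bigl(R(a)+R'(k-a)\bigr)+R''(m-k)$, and the substitution $a_1=a$, $a_2=k-a$, $a_3=m-k$ makes the expression manifestly symmetric in the three summands), so that the parenthesization is immaterial and the same symmetric formula holds for $I_1\diamond\cdots\diamond I_n$.

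With this in hand the computation is immediate. Substituting $R_j(a_j)=a_j-g_j+I_j(a_j)$ into the symmetric formula gives
\[
\sum_{j=1}^n R_j(a_j)=\Bigl(\sum_{j=1}^n a_j\Bigr)-\Bigl(\sum_{j=1}^n g_j\Bigr)+\sum_{j=1}^n I_j(a_j)=m-g+\sum_{j=1}^n I_j(a_j),
\]
where I used $\sum_j a_j=m$ and $g=g_1+\cdots+g_n$. The crucial point is that the affine contribution $m-g$ is a constant, independent of the chosen decomposition $(a_1,\dots,a_n)$, so it pulls out of the minimization. Taking the minimum over all decompositions with $\sum_j a_j=m$ therefore yields
\[
R_1\diamond\cdots\diamond R_n(m)=m-g+\min_{a_1+\cdots+a_n=m}\sum_{j=1}^n I_j(a_j)=m-g+I_1\diamond\cdots\diamond I_n(m),
\]
which is the claimed identity. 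Alternatively, the same conclusion can be reached by induction on $n$: the base case $n=1$ is \eqref{eq:rm} itself, and the inductive step computes $R_1\diamond\cdots\diamond R_{n-1}\diamond R_n$ by applying the two-fold definition, using the inductive hypothesis $R_1\diamond\cdots\diamond R_{n-1}(k)=k-g'+I_1\diamond\cdots\diamond I_{n-1}(k)$ with $g'=g_1+\cdots+g_{n-1}$, and then checking that the linear terms again combine to the decomposition-independent constant $m-g$.

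The only genuine care needed — and the closest thing to an obstacle — is ensuring that every minimum in sight is actually attained, so that $\diamond$ is well defined and associativity is legitimate. This is guaranteed by the growth properties already noted after \eqref{eq:rm}: each $I_j$ is nonnegative and eventually $0$, while each $R_j$ is nonnegative, nondecreasing, and equal to $m-g_j$ for $m>\sup G_j$. Consequently each summand $R_j(a_j)$ grows without bound as $a_j\to\pm\infty$ along the constraint $\sum_j a_j=m$, so the infimum is a minimum realized on a finite set of decompositions, and the symmetric reindexing used above is valid. Granting this, the proof reduces to the one-line substitution exhibited above.
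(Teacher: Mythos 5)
Your proof is correct and follows essentially the same route as the paper: substituting the affine relation \eqref{eq:rm} into the infimum convolution and pulling the decomposition-independent constant $m-g$ out of the minimum (the paper does the $n=2$ case and invokes induction, which is exactly your stated alternative). Your extra remarks on associativity and attainment of the minima are fine but not needed, since all values lie in $\Z_{\ge 0}$, where every nonempty set has a least element.
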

\begin{proof}
To simplify the notation, we assume that $n=2$; the general case follows by  induction. We have
\begin{align*}
R_1\diamond R_2(m)=&\min_{k\in\Z}R_1(k)+R_2(m-k)=\\
&=\min_{k\in\Z}(k-g_1+I_1(k)+m-k-g_2+ I_2(m-k))=\\
&=m-g_1-g_2+I_1\diamond I_2(m).
\end{align*}
\end{proof}

Now we can reformulate Theorem~\ref{thm:main}:
\begin{theorem}\label{cor:main}
For any rational cuspidal curve of degree $d$ with singular points $z_1,\dots,z_n$, and for $R_1,\dots,R_n$ the functions as defined in \eqref{eq:rm-def}, one has that 
for any $j=\{-1,\ldots,d-2\}$,
\[R_1\diamond R_2\diamond \ldots \diamond R_n(jd+1)=\frac{(j+1)(j+2)}{2}.\]
\end{theorem}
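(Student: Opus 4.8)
The plan is to derive Theorem~\ref{cor:main} directly from Theorem~\ref{thm:main} by applying the translation between the infimum convolution of the gap functions $I_j$ and the infimum convolution of the semigroup-counting functions $R_j$, which is exactly the content of Lemma~\ref{lem:RandI}. So the proof should be very short: it is a substitution, not a new argument.

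First I would invoke Lemma~\ref{lem:RandI} with $m = jd+1$ to write
\[
R_1\diamond R_2\diamond \ldots \diamond R_n(jd+1) = (jd+1) - g + I_1\diamond I_2\diamond \ldots \diamond I_n(jd+1),
\]
where $g = g_1 + \ldots + g_n = \sum_i g(L_i)$. Next I would apply Theorem~\ref{thm:main}, which gives the value of the right-hand convolution as $\frac{(j-d+1)(j-d+2)}{2}$, so that the expression becomes $(jd+1) - g + \frac{(j-d+1)(j-d+2)}{2}$. The remaining work is purely algebraic: substitute the value of $g$ and simplify.

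The key arithmetic input is that for a rational cuspidal curve of degree $d$ one has $\sum_i g(L_i) = \frac{(d-1)(d-2)}{2}$, which is precisely equation~\eqref{eq:gofC} together with the fact that the curve has genus $0$. Plugging $g = \frac{(d-1)(d-2)}{2}$ into the displayed expression and expanding both $\frac{(j-d+1)(j-d+2)}{2}$ and $\frac{(d-1)(d-2)}{2}$ as quadratics in $j$ and $d$, the terms involving $d^2$ and the mixed $jd$ terms should cancel against the $jd$ coming from $m = jd+1$, leaving exactly $\frac{(j+1)(j+2)}{2}$. I would carry out this expansion explicitly to confirm the cancellation, since that is the only place an error could enter.

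The main obstacle here is not conceptual but bookkeeping: one must verify that the linear term $jd+1$, the constant shift $-g$, and the quadratic $\frac{(j-d+1)(j-d+2)}{2}$ combine to give the clean quadratic $\frac{(j+1)(j+2)}{2}$ independent of $d$. Because this is a finite polynomial identity in the two variables $j$ and $d$, it can be checked directly, and the statement of Theorem~\ref{cor:main} guarantees in advance that it must hold, so no genuine difficulty is expected. The range $j \in \{-1,\ldots,d-2\}$ matches the range in Theorem~\ref{thm:main}, so no adjustment of the index set is required.
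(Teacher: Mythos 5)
Your proposal is correct and follows exactly the paper's own route: the paper likewise deduces Theorem~\ref{cor:main} from Theorem~\ref{thm:main} via Lemma~\ref{lem:RandI}, using that $g=\frac{(d-1)(d-2)}{2}$ by \eqref{eq:gofC} and Lemma~\ref{lem:gaplem}, followed by the same algebraic simplification. The polynomial identity you flag does check out: $(jd+1)-\frac{(d-1)(d-2)}{2}+\frac{(j-d+1)(j-d+2)}{2}=\frac{(j+1)(j+2)}{2}$.
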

This formulation follows from Theorem~\ref{thm:main} by an easy algebraic manipulation  together with the observation that by \eqref{eq:gofC}
and Lemma~\ref{lem:gaplem}, the quantity
$g$ from Lemma~\ref{lem:RandI} is given by $\frac{(d-1)(d-2)}{2}$.

The formula bears strong resemblance to \cite[Proposition 2]{FLMN04}, but in that article only the `$\ge$' part is proved and an equality 
in case $n=1$ is conjectured.

\begin{remark}\label{rem:import}
Observe that by definition
\[R_1\diamond \ldots \diamond R_n(k)=\min_{\substack{k_1,\ldots,k_n\in\Z\\ k_1+\ldots+k_n=k}}R_1(k_1)+\ldots+R_n(k_n).\]
Since for negative values $R_j(k)=0$ and $R_j$ is non-decreasing on $[0,\infty)$, the minimum will always be achieved for $k_1,\ldots,k_n\ge -1$.
\end{remark}

\subsection{Applications}
From Theorem~\ref{cor:main} we can deduce many general estimates for rational cuspidal curves. Throughout this subsection we shall be assuming
that $C$ has degree $d$, its singular points are $z_1,\ldots,z_n$, the semigroups are $S_1,\ldots,S_n$, and the corresponding $R$--functions
are $R_1,\ldots,R_n$. Moreover, we assume that the characteristic sequence of the singular point $z_i$ is
$(p_i;q_{i1},\ldots,q_{ik_i})$.  We order the singular points so that
that $p_1\ge p_2\ge\ldots\ge p_n$. 

We can immediately prove the result of Matsuoka--Sakai, \cite{MaSa89}, following the ideas in \cite[Section 3.5.1]{FLMN04}.

\begin{proposition}\label{prop:matsa}
We have $p_1>d/3$.
\end{proposition}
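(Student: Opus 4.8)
The plan is to deduce $p_1 > d/3$ from the equality in Theorem~\ref{cor:main} by exploiting the most tractable instance, namely the largest useful value $j = d-3$. The strategy hinges on a lower bound for each function $R_i(m)$ coming from the multiplicity $p_i$ of the singular point: since the semigroup $S_i$ is generated by integers all of which are at least $p_i$ (the multiplicity is the smallest positive element of the semigroup), one has $R_i(m) = \#\{s \in S_i : 0 \le s < m\}$, and for small $m$ this count grows slowly. The key elementary observation I would record is that for $0 \le m \le p_i$ the only semigroup element below $m$ is $0$, so $R_i(m) \le 1$ on that range; more generally $R_i(m) \le \lceil m/p_i \rceil$ because the semigroup elements in $[0,m)$ occupy at most one residue-free block of length $p_i$ each. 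This gives the uniform bound $R_i(k_i) \le \lceil k_i / p_i\rceil$, and since $p_1 \ge p_i$ for all $i$, every $R_i$ is dominated by the same slowest-growth estimate governed by $p_1$.

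Next I would set $j = d-3$ in Theorem~\ref{cor:main}, so the right-hand side becomes $\tfrac{(d-2)(d-1)}{2}$, evaluated at the argument $(d-3)d + 1 = d^2 - 3d + 1$. By Remark~\ref{rem:import}, the infimum convolution is realized by a decomposition $k_1 + \cdots + k_n = d^2-3d+1$ with each $k_i \ge -1$, and $R_1\diamond\cdots\diamond R_n(d^2-3d+1) = \sum_i R_i(k_i)$ for the optimal choice. The point is that the left-hand side is simultaneously forced to equal $\tfrac{(d-1)(d-2)}{2}$ (a quantity of order $d^2/2$) while each summand $R_i(k_i)$ is bounded above using the multiplicity estimate. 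Summing the bounds $R_i(k_i) \le \lceil k_i/p_1\rceil$ over $i$ and using $\sum k_i = d^2-3d+1$ yields roughly $\sum_i R_i(k_i) \lesssim (d^2-3d+1)/p_1 + n$, which must be at least $\tfrac{(d-1)(d-2)}{2}$. Comparing the dominant $d^2$ terms then forces $p_1$ to be bounded below by approximately $d/3$ after the arithmetic is carried out; the constant $3$ emerges from comparing $d^2/p_1$ against $d^2/2$ up to the lower-order corrections.

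I expect the main obstacle to be making the combinatorial bound on $R_i$ sharp enough to extract the precise constant $d/3$ rather than a weaker inequality, and handling the lower-order terms and the small error introduced by the ceilings and the $k_i \ge -1$ slack carefully. A naive bound $R_i(m) \le \lceil m/p_i\rceil$ loses a bounded additive amount per singular point, so I would track these corrections and possibly choose a slightly different value of $j$, or combine the estimate at $j=d-3$ with the genus identity (the $j=-1$ case, which encodes $\sum_i g_i = \tfrac{(d-1)(d-2)}{2}$) to cancel the troublesome lower-order contributions. The cleanest route is likely to follow the argument of \cite[Section 3.5.1]{FLMN04} as the paper suggests: translate the equality of Theorem~\ref{cor:main} into a statement about how many semigroup elements fit below a threshold, then invoke the multiplicity constraint that semigroup elements are spaced at gaps governed by $p_1$. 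The delicate step is the passage from the exact equality to a clean strict inequality $p_1 > d/3$, which requires showing the extremal decomposition cannot pack semigroup elements densely enough unless $p_1$ exceeds $d/3$.
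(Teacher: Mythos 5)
Your proposal has two fatal flaws. First, the key lemma you rely on, $R_i(m) \le \lceil m/p_i \rceil$, is false. A numerical semigroup is closed under addition, so its elements are not spaced $p_i$ apart beyond the first gap; in fact $S_i$ contains every integer from the conductor $2g_i$ onward, so $R_i(m) = m - g_i$ for all large $m$. Already for the cusp $(2;3)$ one has $S = \{0,2,3,4,5,\ldots\}$, hence $R(4) = \#\{0,2,3\} = 3 > \lceil 4/2 \rceil = 2$. The only valid estimate in this vein goes the other way: since $0, p_i, 2p_i, \ldots$ all lie in $S_i$, one has the \emph{lower} bound $R_i(m) \ge 1 + \lfloor (m-\varepsilon)/p_i \rfloor$ for every $\varepsilon>0$.

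Second, even granting your bound, the arithmetic cannot produce $p_1 > d/3$. Your inequality $(d^2-3d+1)/p_1 + n \ge \tfrac{(d-1)(d-2)}{2}$ forces $p_1 \lesssim 2$ for large $d$; that is an \emph{upper} bound on $p_1$, not a lower bound, and for $d>6$ it even contradicts the conclusion $p_1 > d/3$ you are aiming for. The constant $3$ does not emerge from comparing $d^2/p_1$ with $d^2/2$. The paper instead works at the opposite end of the range of $j$, where multiplicity information is actually visible: it takes $j=1$ in Theorem~\ref{cor:main}, so the minimum convolution evaluated at $d+1$ equals $3$. Assuming $3p_1 \le d$ (hence $3p_j \le d$ for every $j$, since $p_1$ is the largest multiplicity), the valid lower bound above gives $R_j(k_j) \ge 1 + \lfloor 3k_j/d \rfloor - \delta_j$ (with $\delta_j=1$ exactly when $d \mid 3k_j$), and a short floor-function bookkeeping shows that \emph{every} decomposition $\sum k_j = d+1$ with $k_j \ge -1$ satisfies $\sum_j R_j(k_j) \ge 1 + \lfloor (3d+3)/d \rfloor = 4 > 3$, contradicting the theorem. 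So the correct mechanism is: the multiplicities give lower bounds on the $R_j$, which contradict the exact small value of the minimum convolution at a small argument. Your proposal inverts both the direction of the estimate and the end of the range: at $j = d-3$ the evaluation point $d^2-3d+1 = 2g-1$ sits at the conductor scale, where each $R_i$ is essentially linear of slope one and retains almost no information about $p_i$.
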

\begin{proof}
Suppose $3p_1\le d$. It follows that for any $j$, $3p_j\le d$. 
Let us choose $k_1,\ldots,k_n\ge -1$
such that $\sum k_j=d+1$.
For any $j$, the elements $0,p_j,2p_j,\ldots$ all belong to the $S_j$. The function $R_j(k_j)$ counts elements in $S_j$
strictly smaller than $k_j$, hence for any $\varepsilon>0$ we have
\[R_j(k_j)\ge 1+\intfrac{k_j-\varepsilon}{p_j}.\]
Using $3p_j\le d$ we rewrite this as $R_j(k_j)\ge 1+\intfrac{3k_j-3\varepsilon}{d}$.
Since $\varepsilon>0$ is arbitrary, setting $\delta_j=1$ if $d|3k_j$, and $0$ otherwise, we write
\[R_j(k_j)\ge 1+\intfrac{3k_j}{d}-\delta_j.\]
We get
\begin{equation}\label{eq:r1}
\sum_{j\colon d|3k_j} R_j(k_j)\ge \intfrac{\sum 3k_j}{d}.
\end{equation}
Using the fact that $\intfrac{a}{d}+\intfrac{b}{d}\ge \intfrac{a+b}{d}-1$ for any $a,b\in\Z$, we estimate the other terms: \begin{equation}\label{eq:r2}
\sum_{j\colon d\not\;|\,3k_j}R_j(k_j)\ge 1+\intfrac{3\sum k_j}{d}.
\end{equation}
Since $\sum k_j=d+1$, there must be at least one $j$ for which $d$ does not divide $3k_j$. Hence adding \eqref{eq:r1} to \eqref{eq:r2}
we obtain
\[
R_1(k_1)+\ldots+R_n(k_n)\ge 1+\intfrac{\sum_{j=1}^n 3k_j}{d}=1+\intfrac{3d+3}{d}=4.
\]
This
contradicts Theorem~\ref{cor:main} for $j=1$, and the contradiction concludes the proof.
\end{proof}

We also have the following simple result.
\begin{proposition}\label{prop:strong}
Suppose that $p_1>\frac{d+n-1}{2}$. Then $q_{11}<d+n-1$.
\end{proposition}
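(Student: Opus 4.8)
The plan is to reach a contradiction with Theorem~\ref{cor:main} by supposing that $q_{11}$ is too large, exploiting the fact that a large multiplicity $p_1$ forces the semigroup $S_1$ to be very sparse below its first non-multiple of $p_1$. First I would apply Theorem~\ref{cor:main} with $j=1$, which asserts
\[
R_1\diamond R_2\diamond\cdots\diamond R_n(d+1)=\frac{(1+1)(1+2)}{2}=3 .
\]
By Remark~\ref{rem:import} the left-hand side is the minimum of $\sum_i R_i(k_i)$ over all decompositions $k_1+\cdots+k_n=d+1$ with each $k_i\ge-1$. Evaluating this expression at the single decomposition $k_1=d+1$, $k_2=\cdots=k_n=0$, and using $R_i(0)=0$ (see \eqref{eq:rm-def}), gives the lower bound $R_1(d+1)\ge 3$.

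Next I would estimate $R_1(d+1)$ directly from $S_1$. As recalled in Section~\ref{subsecsemi}, the multiplicity $p_1$ and the first characteristic exponent $q_{11}$ are the two smallest generators of $S_1$, so every element of $S_1$ strictly below $q_{11}$ is a non-negative multiple of $p_1$. Assume now, toward a contradiction, that $q_{11}\ge d+1$. Then the elements of $S_1$ lying in $[0,d+1)$ are precisely the multiples of $p_1$ in $[0,d]$. Since the hypothesis gives $p_1>\tfrac{d+n-1}{2}\ge \tfrac{d}{2}$, we have $2p_1>d$, so $0$ and $p_1$ are the only such multiples and therefore $R_1(d+1)\le 2$. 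This contradicts $R_1(d+1)\ge 3$, so in fact $q_{11}\le d$. As $d<d+n-1$ whenever $n\ge 2$, this is exactly the asserted inequality $q_{11}<d+n-1$.

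The argument is short once the machinery of Section~\ref{sec:reform} is in place, so the work concentrates in two places. The first is the structural input that below $q_{11}$ the semigroup $S_1$ contains only multiples of $p_1$; this is standard (it amounts to $q_{11}$ being the smallest element of $S_1$ that is not a multiple of $p_1$, which follows from the condition $p_1\nmid q_{11}$ in the characteristic sequence), but it is the step on which the whole count rests. The second, more delicate point is the strictness of the conclusion: the inequalities above only ever produce non-strict bounds on $q_{11}$ (here $q_{11}\le d$, and with the variant decomposition $k_1=d+n$, $k_2=\cdots=k_n=-1$ one gets $q_{11}\le d+n-1$), so the strict inequality $q_{11}<d+n-1$ genuinely requires $n\ge 2$. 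I would therefore prove Proposition~\ref{prop:strong} in the stronger and cleaner form $q_{11}\le d$, from which the stated bound follows for curves with more than one cusp.
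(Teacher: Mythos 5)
Your proof is correct and follows the same strategy as the paper's: both apply Theorem~\ref{cor:main} at $j=1$, both rest on the fact that every element of $S_1$ below $q_{11}$ is a multiple of $p_1$, and both reach a contradiction by evaluating the infimum convolution of Remark~\ref{rem:import} at a single decomposition whose value is at most $2<3$. The substantive difference is the decomposition: the paper assumes $q_{11}>d+n-1$, takes $k_1=d+n$, $k_2=\cdots=k_n=-1$, and gets $R_1(d+n)=2$, while you assume $q_{11}\ge d+1$, take $k_1=d+1$, $k_2=\cdots=k_n=0$, and get $R_1(d+1)\le 2$. Your choice buys a genuinely stronger conclusion, $q_{11}\le d$, and this matters for the strictness of the statement: for $n\ge 2$ your bound immediately yields $q_{11}<d+n-1$ as asserted, whereas the paper's argument, since it negates the conclusion as $q_{11}>d+n-1$ rather than $q_{11}\ge d+n-1$, literally establishes only the non-strict bound $q_{11}\le d+n-1$. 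Your caution about the case $n=1$ is vindicated more than you could have known: the strict statement actually fails for unicuspidal curves, since the curve $x^d+y^{d-1}z=0$ of Section~\ref{subseccusp} has a single singular point of type $(d-1;d)$, which satisfies $p_1=d-1>d/2$ while $q_{11}=d=d+n-1$; your bound $q_{11}\le d$ is tight on this example. So your formulation (prove $q_{11}\le d$, deduce the proposition for $n\ge 2$) is, if anything, the cleaner and more precise one.
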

\begin{proof}
Suppose that $p_1>\frac{d+n-1}{2}$ and $q_{11}>d+n-1$. It follows that $R_1(d+n)=2$. But then we choose $k_1=d+n$, $k_2=\ldots=k_n=-1$
and we get $\sum_{j=1}^nR_j(k_j)=2$, hence
\[R_1\diamond R_2\diamond \ldots \diamond R_n(d+1)\le 2\]
contradicting Theorem~\ref{cor:main}.
\end{proof}

\subsection{Some examples and statistics}

We will now present some examples and statistics, where we compare our new criterion with the semicontinuity of the 
spectrum as used in \cite[Property $(SS_l)$]{FLMN04}
and the Orevkov criterion \cite[Corollary 2.2]{Orev02}. It will turn out that the semigroup distribution property is quite strong and   closely related
to the semicontinuity of the spectrum, but they are not the same. There are cases which pass one criterion and fail to another. Checking
the semigroup property is definitely a much faster task  than comparing spectra; refer to~\cite[Section~3.6]{FLMN06} for more examples.

\begin{example}\label{ex:twomilions} 
Among the 1,920,593 cuspidal singular points with Milnor number of the form $(d-1)(d-2)$ for $d$ ranging between $8$ and $64$,
there are only 481   that pass the semigroup distribution criterion, that is Theorem~\ref{thm:first}. All of these  pass the Orevkov criterion
$\ol{M}<3d-4$.  Of those 481, we compute that 475 satisfy the semicontinuity of the spectrum condition and  6 them fail the condition; 
these are: $(8; 28,45)$, $(12; 18, 49)$,
$(16; 56, 76, 85)$, $(24; 36, 78, 91)$, $(24; 84, 112, 125)$, $(36; 54, 114, 133)$.
\end{example}
\begin{remark}
The computations in Example~\ref{ex:twomilions} were made on a PC computer during one afternoon. Applying the spectrum criteria
for all these cases would take much longer. The computations for degrees between $12$ and $30$ is approximately $15$ times faster for semigroups; the difference seems to grow with
the degree. The reason is that even though the spectrum can be given explicitly from the characteristic sequence (see \cite{Sait}),
it is a set of fractional numbers and the algorithm is complicated.
\end{remark}

\begin{example}
There are $28$ cuspidal singular points with Milnor number equal to $110=(12-1)(12-2)$. We ask, which of
these singular points can possibly occur as a unique singular point on a degree $12$ rational curve?
We apply the semigroup distribution criterion.  Only 8 singular points pass the criterion, as is seen on Table~\ref{fig:12}.
\end{example}

\begin{table}[h]
\begin{tabular}{||l|c||l|c||l|c||}\hline \hline
(3;56) & fails at $j=1$ & (6;9,44) & fails at $j=1$ & (8;12,14,41) & fails at $j=3$ \\ \hline
 (4;6,101) & fails at $j=1$ & (6;10,75) & fails at $j=1$ & (8;12,18,33) & fails at $j=4$ \\ \hline
 (4;10,93) & fails at $j=1$ & (6;14,59) & fails at $j=2$ & (8;12,22,25) & \textbf{passes} \\ \hline
 (4;14,85) & fails at $j=1$ & (6;15,35) & fails at $j=2$ & (8;12,23) & \textbf{passes} \\ \hline
 (4;18,77) & fails at $j=1$ & (6;16,51) & fails at $j=2$ & (8;14,33) & fails at $j=1$ \\ \hline
 (4;22,69) & fails at $j=1$ & (6;20,35) & fails at $j=4$ & (9;12,23) & \textbf{passes} \\ \hline
 (4;26,61) & fails at $j=1$ & (6;21,26) & \textbf{passes} & (10;12,23) & \textbf{passes} \\ \hline
 (4;30,53) & fails at $j=1$ & (6;22,27) & \textbf{passes} & (11;12) & \textbf{passes} \\ \hline
 (4;34,45) & fails at $j=1$ & (6;23) & \textbf{passes}  & & \\ \hline
 (6;8,83) & fails at $j=1$ & (8;10,57) & fails at $j=2$  & & \\ \hline\hline
\end{tabular}\vskip.1in
\caption{Semigroup property for cuspidal singular points with Milnor number $12$. If a cuspidal singular point fails the
semigroup criterion, we indicate the first $j$ for which $I(12j+1)\neq \frac{(j-d+1)(j-d+2)}{2}$.}\label{fig:12}
\end{table}

Among the curves in Table~\ref{fig:12}, all those that are obstructed by the semigroup distribution, are also obstructed 
by the semicontinuity of the spectrum.  The spectrum also obstructs  the case of $(8;12,23)$.

\begin{example}
There are 2330 pairs $(a,b)$ of coprime integers, such that $(a-1)(b-1)$ is of form $(d-1)(d-2)$ for $d=5,\ldots,200$. 
Again we ask  if there exists a degree $d$ rational cuspidal curve having a single singular point with characteristic sequence $(a;b)$. Among these 2330 cases, precisely 302
satisfy the semigroup distribution property. Out of these 302 cases, only one, namely $(2;13)$, 
does not appear on the list from \cite{FLMN04};  see Section~\ref{subseccusp} for the list.
It is therefore very likely   that the semigroup distribution property alone is strong enough to obtain the classification of \cite{FLMN04}.
\end{example}

\begin{example}
In Table~\ref{fig:30} we present all the cuspidal points 
with Milnor number $(30-1)(30-2)$  that satisfy
the semicontinuity of the spectrum. Out of these, all
but the three ($(18;42,65)$, $(18;42,64,69)$ and $(18;42,63,48)$)
satisfy the semigroup property. All  three fail the semigroup property for $j=1$. 
In particular, for these three cases  the semigroup property obstructs the cases which pass the semicontinuity of the spectrum criterion.
\end{example}

\begin{table}[h]
\begin{tabular}{||l|l|l|l||}\hline
(15; 55, 69) & (18;42,64,69) & (20; 30, 59) & (25; 30, 59) \\\hline
(15; 57, 71) & (18;42,63,68) & (24; 30, 57, 62) & (27; 30, 59) \\\hline
(15;59) & (20; 30,55,64) & (24;30,58,63) & (28; 30,59) \\\hline
(18;42,65) & (20; 30,58,67) &  (24; 30,59) & (29; 30)\\\hline
\end{tabular}\vskip.1in
\caption{Cuspidal singular points with Milnor number $752$ satisfying
the semicontinuity of the spectrum criterion.}\label{fig:30}
\end{table}

\begin{example}
The configuration of five critical points $(2; 3)$, $(2; 3)$, $(2; 5)$, $(5; 7)$ and  $(5; 11)$ passes the semigroup, the spectrum and the
Orevkov criterion for a degree $10$ curve. In other words, none of the aforementioned criteria obstructs the existence of such curve.
We point out that it is conjectured (see \cite{Moe08,Pion}) that a rational cuspidal curve can have at most $4$ singular points. In other words,
these three criteria alone are insufficient to prove that conjecture.
\end{example}

\end{document}